\def\blx@addquote#1{%
  #1%
}
\DeclareMathOperator*{\pos}{pos}
\DeclareMathOperator*{\lin}{lin}
\DeclareMathOperator*{\rg}{rg}
\DeclareMathOperator*{\conv}{conv}
\DeclareMathOperator*{\aff}{aff}
\DeclareMathOperator*{\bd}{bd}
\DeclareMathOperator*{\cl}{cl}
\DeclareMathOperator*{\inter}{int}
\newcommand{\R}{\mathbb{R}}
\newcommand{\N}{\mathbb{N}}
\newtheorem{theorem}{Theorem}[section] 
\newtheorem{corollary}[theorem]{Corollary}
\newtheorem{proposition}[theorem]{Proposition}
\newtheorem{lemma}[theorem]{Lemma}
\newtheorem{conjecture}[theorem]{Conjecture}
\newtheorem{remark}[theorem]{Remark}
\newtheorem{theoman}{Theorem}[section]
\theoremstyle{definition}
\theoremstyle{remark}
\newtheorem{example}[theorem]{Example}
\newcommand{\Sn}{\mathbb{S}^{n-1}}
\newcommand{\vol}{\mathrm{vol}\,}
\newcommand{\sph}{\mathbb{S}}
\newcommand{\scc}{{\tt scc}} 
\newcommand{\cv}{{\tt cv}}
\newcommand{\ov}{\overline}
\DeclareMathOperator{\V}{V}
\DeclareMathOperator{\relint}{relint}
\let\emptyset\varnothing
\numberwithin{equation}{section}
\begin{document}

\title{On polynomial inequalities for cone-volumes of polytopes}
\author{Tom Baumbach}
\author{Martin Henk}

\address{Technische Universität Berlin, Institut für Mathematik, Sekr.~MA4-1, Straße des 17.~Juni 136, 10623 Berlin, Germany}
\email{baumbach@math.tu-berlin.de}
\email{henk@math.tu-berlin.de}

       \begin{abstract}
         Motivated by the discrete logarithmic Minkowski problem we study for a
given matrix $U\in\mathbb{R}^{n\times m}$  its cone-volume set $C_{\tt cv}(U)$ 
consisting of all the cone-volume vectors of polytopes 
$P(U,b)=\{ x\in\mathbb{R}^n : U^\intercal  x\leq b\}$,
$b\in\mathbb{R}^n_{\geq 0}$. We will show that $C_{\tt cv}(U)$ is a
path-connected semialgebraic
set which extends former results in the planar
case or for particular polytopes. Moreover, we define 
a subspace concentration polytope $P_{\tt scc}(U)$ which represents geometrically  the subspace
concentration conditions for a finite discrete Borel measure
on the sphere.  This is up to a scaling the 
basis matroid polytope of $U$, and these two sets, $P_{\tt scc}(U)$ and
$C_{\tt cv}(U)$, also offer a new geometric 
point of view to the discrete logarithmic Minkowski
problem. 
       \end{abstract}
\subjclass[2020]{52A40, 52A38, 52B11}
\keywords{Logarithmic Minkowski problem, subspace concentration condition, semialgebraic sets, cone-volume measure, matroid base polytope}

   \maketitle

	\section{Introduction}
The setting for this paper is the $n$-dimensional Euclidean space $\R^n$. For two vectors $x,y \in \R^n$ we denote by $\langle x,y \rangle$ the standard scalar product of $x$ and $y$, and  $\| x \| = \sqrt{\langle x,x \rangle}$  denotes the associated Euclidean norm; $\sph^{n-1}=\{x\in\R^n : \| x \|=1\}$ is the $(n-1)$-sphere.  The convex hull of a non-empty set $M\subset\R^n$ is denoted by $\conv M$, and if $M$ is finite then $\conv M$ is called a polytope.     By a result attributed to Minkowski and Weyl, $P\subset\R^n$ is a polytope if and only if 
    \begin{equation*}
       P=P(U,b)=\{x\in\R^n : U^\intercal x\leq b\}
    \end{equation*}
    for a  matrix $U=(u_1,\dots,u_m)\in(\Sn)^m$ with $\pos U=\R^n$ and $b\in\R^m$. 
    Here $\pos U$ means the positive hull, i.e., the set of all non-negative linear combinations of the column vectors $u_1,\dots,u_m\in\Sn$ of $U$.  Apparently, we may assume that the column vectors are pairwise different, and therefore we set
    \begin{equation*}
      \mathcal{U}(n,m)=\left \{U=(u_1,\dots,u_m)\in(\Sn)^m : \pos U=\R^n, u_i\ne u_j, i \ne j\right\}. 
    \end{equation*}

    For $1\leq i\leq m$ let
\begin{equation*} 
  F_i(b)=F(u_i,b)=P\cap\{x\in\R^n : \langle u_i,x\rangle =b_i\}
\end{equation*}
  which is always a face of $P$, and, of course, might be empty. If $\dim F_i(b)=\dim P-1$, $F_i(b)$ is called a facet of $P$. For $M\subset \R^n$ we denote by $\vol(M)$ its volume, i.e., its $n$-dimensional Lebesgue measure. If $M$ is contained in a
$k$-dimensional plane $A$, $\vol_k(M)$ refers to the $k$-dimensional Lebesgue measure with respect to $A$.

We will mainly assume that $b\geq 0$. This implies $0\in P$, and if $b > 0$ then  $0 \in \inter P$, i.e., $0$ is an interior point of $P$, and so $\dim P=n$.   
If  $F_i(b)$ is a facet of $P(U,b)$, $\dim P(U,b)=n$, 
then $\frac{1}{n}b_i\vol_{n-1}(F_i(b))$ is the volume of the cone (pyramid)
$\conv(\{0\}\cup F_i(b))$. For $U\in\mathcal{U}(n,m)$, the polytope $P(U,b)$ is the interior-disjoint union of all these cones, so we can write
\begin{equation*}
            \vol(P(U,b))=\frac{1}{n}\sum_{i=1}^m b_i\vol_{n-1}(F_i(b)).
          \end{equation*}
For such a $P=P(U,b)$, $\dim P=n$, we consider its cone-volume measure $\V_P$ which is  
the finite non-negative Borel measure $\V_P:\mathcal{B}(\sph^{n-1})\to\R_{\geq 0}$          
 given by
\begin{equation*}
  \V_P(\eta) =\sum_{i=1}^m \frac{b_i}{n}\vol_{n-1}(F_i(b))\,\delta_{u_i}(\eta) =\sum_{u_i\in\eta} \frac{b_i}{n}\vol_{n-1}(F_i(b)). 
\end{equation*}
Here $\eta\subseteq \sph^{n-1}$ is a Borel set and  $\delta_{u_i}(\cdot)$ is the Dirac measure in $u_i$, i.e., $\delta_{u_i}(\eta)=1$ if $u_i\in\eta$, otherwise it is $0$.  

The discrete logarithmic Minkowski (existence) problem introduced by Böröczky, Lutwak, Yang and Zhang \cite{BoeroeczkyLutwakYangEtAl2025} asks for necessary and sufficient conditions such that a finite discrete Borel measure  
\begin{equation}
  \mu:\mathcal{B}(\sph^{n-1})\to\R_{\geq 0}, \quad \mu(\eta)=\sum_{i=1}^m \gamma_i\,\delta_{u_i}(\eta)
\label{eq:measure}  
\end{equation}
with $u_i\in\sph^{n-1}$, $\gamma_i> 0$, is the cone-volume measure of a polytope.  We will denote such a measure also by  $\mu(U,\gamma)$, where $\gamma\in\R^m_{>0}$ is the vector with entries $\gamma_i$.

This discrete problem can be extended to the continuous setting, i.e., to the space of all convex bodies and  
the corresponding general logarithmic Minkowski problem is a cornerstone of modern convex geometry. 
The associated partial differential equation for the logarithmic Minkowski problem is the following Monge-Ampère type equation on the unit sphere: For a given function $f: \sph^{n-1} \rightarrow (0,\infty)$, solve for the support function $h: \sph^{n-1} \rightarrow (0,\infty)$ of a convex body,
\begin{equation*}
    h \det(h_{ij} + h \delta_{ij}) = f,
\end{equation*}
where $h_{ij}$ is the covariant derivative of $h$ with respect to an orthonormal frame on $\sph^{n-1}$ and $\delta_{ij}$ is the Kronecker delta. The Monge-Ampère equation has a close relation to the optimal transport with quadratic cost \cite{de2014monge}. 

The cone-volumes are instrumental for computing Wachspress coordinates \cite{ju2005geometric}, which define the adjoint of a polytope and thereby its canonical form. The canonical form was introduced in the context of positive geometries and scattering amplitudes in quantum field theory \cite{arkani2017positive}. The adjoint of a polytope appears in many different mathematical contexts (cf.~\cite{kohn2020projective}) and is relevant for convex optimization \cite{Dmitrii2025}.

The cone-volume measure has found numerous important applications in convex geometry and analysis.
In particular, its properties have been used to establish reverse affine isoperimetric inequalities 
\cite{HE200673,XIONG20103214}. Since the work of Gromov and Milman 
\cite{CM_1987__62_3_263_0}, it has become a central tool, with further 
applications to functional inequalities, asymptotic geometric analysis, and probability theory 
\cite{accb6532790c46f8b15a8d2da8ff473f,Grigorios2009,Barthe2005}.
For its history, relevance and impact we refer to \cite{HuangYangZhang2025, BoeroeczkyLutwakYangEtAl2025, Stancu2016, BoeroeczkyHenk2016a, LiuSunXiong2021, Zhu2014, Boeroeczky2023, BoeroeczkyHegedusZhu2016, ChenLiZhu2019} and to the references within. 
Here we will only focus on the discrete setting. 

The subspace concentration condition (\scc), introduced by Böröczky et al.~\cite{BoeroeczkyLutwakYangEtAl2025}, plays an important role in the classification of the cone-volume measure.   A finite  discrete Borel measure $\mu=\mu(U,\gamma): \mathcal{B}(\sph^{n-1})\to\R_{\geq 0}$ with $U\in\mathcal{U}(n,m)$, $\gamma >0$, is said to satisfy the \scc\, if
\begin{enumerate}
\item for every proper linear subspace $L\subset\R^n$ it holds 
\begin{equation}
           \mu(L)=  \sum_{u_i\in L} \gamma_i \leq \frac{\dim L}{n}\mu(\sph^{n-1}),
 \label{eq:scc1} 
  \end{equation}
\item   and equality holds in \eqref{eq:scc1} if and only if there exists a subspace $\ov L$ complementary to  $L$ such that $\{u_1,\dots,u_m\}\subset L\cup\ov L$.   
\end{enumerate}   

In Section 2 we will define for $U\in\mathcal{U}(n,m)$ the polytope $P_{\scc}(U)$ (see \eqref{def:pscc}), which we call the subspace concentration polytope (of $U$) that captures the \scc. Up to scaling  the polytope $P_\scc(U)$ is (just) the matroid base polytope of the set of column vectors of $U$. 
\begin{proposition} Let $U\in\mathcal{U}(n,m)$ and $\gamma\in\R^m_{>0}$ with  $\sum_{i=1}^m \gamma_i=1$.  
  Then the finite discrete Borel measure $\mu(U,\gamma)$ satisfies the \scc\, if and only if $\gamma\in \relint P_\scc(U)$.
\label{prop:main}  
\end{proposition}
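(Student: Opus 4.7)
The plan is to exploit two facts simultaneously: the defining inequalities of $P_\scc(U)$ are precisely the scc inequalities $\sum_{u_i \in L}\gamma_i \leq \dim L / n$ (indexed by proper subspaces $L \subset \R^n$), together with $\gamma_i \geq 0$ and the normalization $\sum \gamma_i = 1$; and the vertices of $P_\scc(U)$ are, up to the factor $1/n$, the indicator vectors $e_B$ of the bases $B$ of the linear matroid of the columns of $U$. Writing $\mu = \mu(U,\gamma)$, we have $\mu(\Sn) = 1$ and $\mu(L) = \sum_{u_i \in L}\gamma_i$ for every linear subspace $L$, so that the scc inequalities and the facet inequalities of $P_\scc(U)$ literally coincide.

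For the forward direction, assume $\mu$ satisfies the scc. Part (i) gives directly $\gamma \in P_\scc(U)$. As $\gamma > 0$ by hypothesis, the non-negativity constraints are strictly satisfied, so it remains to verify that any subspace inequality which is tight at $\gamma$ is already an implied equality of the whole polytope. If $\mu(L) = \dim L/n$ for some proper $L$, part (ii) of the scc provides a complementary subspace $\ov L$ with $\{u_1,\dots,u_m\} \subset L \cup \ov L$. Applying part (i) to $\ov L$ gives $\mu(\ov L) \leq (n - \dim L)/n$; since $\{u_i\} \subset L \cup \ov L$ and $L \cap \ov L = \{0\}$, the identity $\tilde\mu(L) + \tilde\mu(\ov L) = 1$ holds for any $\tilde\gamma \in P_\scc(U)$, which forces both inequalities to be tight at every $\tilde\gamma$. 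Hence the $L$-inequality is an implied equality of $P_\scc(U)$, and $\gamma \in \relint P_\scc(U)$ follows.

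For the converse, assume $\gamma \in \relint P_\scc(U)$. Part (i) of the scc is automatic. For part (ii), if $\mu(L) = \dim L/n$ for a proper subspace $L$, then membership in the relative interior forces the $L$-inequality to be an implied equality; in particular it is tight at every vertex of $P_\scc(U)$, so $|B \cap S_L| = \dim L$ for every matroid basis $B$, where $S_L = \{i : u_i \in L\}$. A short rank argument shows $L = \lin\{u_i : u_i \in L\}$, for otherwise the inequality for the smaller subspace spanned by those columns would be strictly sharper and would fail at $\gamma$. Setting $\ov L := \lin\{u_i : u_i \notin L\}$, the basis condition together with the matroid augmentation property yields $\dim \ov L = n - \dim L$ (the lower bound is immediate, the upper bound follows because otherwise one could extend an independent subset of $\{u_i : i \notin S_L\}$ of size $n - \dim L + 1$ to a basis $B$ with $|B \cap S_L| < \dim L$); since the columns of $U$ span $\R^n$ one has $L + \ov L = \R^n$, and the dimension formula gives $L \cap \ov L = \{0\}$. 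Thus $\ov L$ is complementary to $L$ and by construction $\{u_1,\dots,u_m\} \subset L \cup \ov L$, completing the scc.

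The main obstacle I anticipate lies in this last direction: passing cleanly from ``the $L$-inequality is an implied equality of $P_\scc(U)$'' to ``a complementary subspace $\ov L$ with $\{u_i\}\subset L \cup \ov L$ exists.'' This is the matroid-theoretic core of the proposition, amounting to the statement that the face $\{x \in P_\scc(U) : \sum_{u_i \in L} x_i = \dim L/n\}$ equals $P_\scc(U)$ exactly when $S_L$ is a separator of the linear matroid of $U$. The argument above sidesteps invoking separators explicitly by working directly with vertices and bases, but if Section~2 develops the matroid base polytope viewpoint in detail this step can likely be shortened to a citation.
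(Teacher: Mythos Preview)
Your argument is correct and rests on the same core fact as the paper's: a flat inequality of $P_\scc(U)$ is an implied equality precisely when the flat is a separator of the linear matroid of $U$, equivalently when the remaining columns span a complementary subspace. The organization differs, however. The paper first isolates this fact as Lemma~\ref{lem:relintscc} (an explicit description of $\relint P_\scc(U)$ as strict inequalities over non-separator flats and equalities over separators) together with the observation~\eqref{eq:second}, and then the proof of Proposition~\ref{prop:main} becomes a two-line translation. You instead argue directly: in the forward direction you show by the partition $\{u_i\}\subset L\cup\ov L$ that the $L$-inequality is tight everywhere on $P_\scc(U)$; in the backward direction you evaluate the tight inequality at the vertices $\tfrac{1}{n}\chi_U(B)$ and use matroid augmentation to force $\dim\ov L = n-\dim L$. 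Your route is a bit more self-contained (it reproves inline the relevant matroid statement about separators), while the paper's is shorter once Lemma~\ref{lem:relintscc} is in hand---exactly as you anticipated in your closing remark.
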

Here $\relint(M)$ denotes the relative interior of $M\subseteq\R^n$, i.e., the sets of interior points with respect to the ambient space
given by $\aff M$, the affine hull of $M$.

In the special case that $U$ does not contain parallel vectors, the polytope 
$P_\scc((U,-U))$ as well as Proposition 1.1 with the additional symmetry assumption $\gamma_i=\gamma_{m+i}$, $1\leq i\leq m$, was already considered by Liu et al.~\cite[Thm.~4.7]{LiuSunXiong2024b}.   

In order to show the relation of the \scc\, to the cone-volume measure we also define a cone-volume set $C_\cv(U)$. To this end, we firstly consider for $U\in\mathcal{U}(n,m)$ and  $b\in \R^m_{\geq 0}$ the cone-volume vector 
\begin{equation*}
  \gamma(U,b)=\frac{1}{n}\Big(b_1\vol_{n-1}(F(u_1)),\dots, b_m\vol_{n-1}(F(u_m))\Big)^\intercal\in \R^m_{\geq 0}. 
\end{equation*}
Observe that some of its entries might be zero, if $F_i(b)$ is not of dimension $n-1$ or $b_i=0$.  The set 
\begin{equation*}
  C_\cv(U)=\left\{ \gamma(U,b) : b\in\R^m_{\geq 0} \text{ and } \vol(P(U,b))=1\right\}
\end{equation*}
is called the cone-volume set of $U$.   
Any cone-volume vector $\gamma(U,b)$ of an $n$-dimensional polytope of the type $P(U,b)$ is up to scaling to volume 1 an element of $C_\cv(U)$ as   
\begin{equation}
 \frac{1}{\vol(P(U,b))}\gamma(U,b) = \gamma\left(U, \left(\vol(P(U,b)) \right)^{-1/n}b\right)\in C_\cv(U).
 \label{eq:scaling} 
\end{equation}

If $m=2m'$ is even and $u_{m'+i }=-u_i$, $i=1,\dots,m'$, we denote such a matrix by $U^s\in \mathcal{U}(n,m)$ and a vector $b\in\R^m$ satisfying $b_i=b_{m'+i}$, $i=1,\dots,m'$, will be denoted by $b^s$. Let 
\begin{equation*}
  C^s_\cv(U^s)=\left\{ \gamma(U^s,b^s) : b^s\in\R^m_{\geq 0}, \vol(P(U^s,b^s))=1\right\}
\end{equation*}
be the associated  symmetric cone-volume set. In the groundbreaking paper \cite{BoeroeczkyLutwakYangEtAl2025} it was in particular shown
that an even finite positive Borel measure $\mu(U^s,\gamma^s)$ is the cone-volume measure of an origin symmetric polytope $P(U^s,b^s)$ if and only if $\mu(U^s,\gamma^s)$ satisfies \scc. Hence, with 
Proposition \ref{prop:main} this can be reformulated as (see also \cite[Thm.~4.7]{LiuSunXiong2024b})
\begin{theoman}[\protect{\cite[][Thm.~1.1]{BoeroeczkyLutwakYangEtAl2025}}]  Let $m=2m'$ and $U^s\in\mathcal{U}(n,m)$. Then it holds 
  \begin{equation*}
    C^s_\cv(U^s) \cap\R^m_{>0}=\relint \left(P_\scc (U^s) \cap 
    \left\{ x \in \R^m : x_{i } = x_{m'+ i},\,1\leq i\leq m' \right\}\right).
 \end{equation*}    
\label{thm:blyz}  
\end{theoman}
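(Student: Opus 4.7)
The plan is to combine Proposition~\ref{prop:main} with the original form of the theorem of Böröczky, Lutwak, Yang and Zhang, which asserts that an even finite positive discrete Borel measure $\mu(U^s,\gamma)$ is the cone-volume measure of an origin-symmetric polytope $P(U^s,b^s)$ if and only if $\mu(U^s,\gamma)$ satisfies the \scc. The bridge between the cone-volume set formulation and the SCC formulation is the identity $\vol(P(U,b))=\sum_{i=1}^m \gamma_i(U,b)$, which makes the volume-one normalization built into $C^s_\cv(U^s)$ equivalent to the mass-one normalization $\sum_i\gamma_i=1$ inherent in the definition of $P_\scc(U^s)$.

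For the forward inclusion, suppose $\gamma\in C^s_\cv(U^s)\cap\R^m_{>0}$. Then $\gamma=\gamma(U^s,b^s)$ for some symmetric $b^s\in\R^m_{\geq 0}$ with $\vol(P(U^s,b^s))=1$, so $P(U^s,b^s)$ is origin-symmetric. Strict positivity $\gamma>0$ forces $b^s>0$ and $\dim F(u_i,b^s)=n-1$ for every $i$, so $\mu(U^s,\gamma)$ is exactly the cone-volume measure of $P(U^s,b^s)$; BLYZ therefore yields the \scc\ for $\mu(U^s,\gamma)$. Since $\sum_i\gamma_i=\vol(P(U^s,b^s))=1$, Proposition~\ref{prop:main} applies and places $\gamma$ in $\relint P_\scc(U^s)$, while the symmetry $\gamma_i=\gamma_{m'+i}$ is automatic from the symmetry of $b^s$.

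For the reverse inclusion, suppose $\gamma\in\relint P_\scc(U^s)\cap\{x:x_i=x_{m'+i},\,1\leq i\leq m'\}$. The hyperplane $\sum_i x_i=1$ and the bounds $x_i\geq 0$ belong to the halfspace description of $P_\scc(U^s)$ from Section~2, and relative-interior membership promotes the latter to strict inequalities, giving $\gamma\in\R^m_{>0}$ with $\sum_i\gamma_i=1$. Proposition~\ref{prop:main} then provides the \scc\ for the (automatically even) measure $\mu(U^s,\gamma)$, and the existence direction of BLYZ produces an origin-symmetric polytope $P(U^s,b^s)$ realizing $\mu(U^s,\gamma)$ as its cone-volume measure. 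The origin symmetry of $P(U^s,b^s)$ forces $b^s$ to be symmetric, and $\vol(P(U^s,b^s))=\sum_i\gamma_i=1$, so $\gamma=\gamma(U^s,b^s)\in C^s_\cv(U^s)$.

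The only point requiring care in this reformulation is extracting the positivity $\gamma>0$ and the normalization $\sum_i\gamma_i=1$ from $\gamma\in\relint P_\scc(U^s)$: this rests on inspecting the halfspace description of $P_\scc(U^s)$ introduced in Section~2 to confirm that the constraints $x_i\geq 0$ are facet-defining (which amounts to ruling out loops in the associated matroid, automatic since each $u_i\in\sph^{n-1}$) and that $P_\scc(U^s)$ is contained in the hyperplane $\sum_i x_i=1$. Once these two bookkeeping items are in place, Theorem~\ref{thm:blyz} is a direct transcription of BLYZ through Proposition~\ref{prop:main}.
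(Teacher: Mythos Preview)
Your proposal is correct and follows exactly the approach indicated in the paper: the paper does not give a separate proof of Theorem~\ref{thm:blyz} but presents it as the reformulation of the Böröczky--Lutwak--Yang--Zhang theorem obtained via Proposition~\ref{prop:main}. Your write-up simply fills in the two inclusions explicitly, and the bookkeeping point you flag (positivity and normalization from $\relint P_\scc(U^s)$) is handled in the paper by Lemma~\ref{lem:relintscc}.
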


In the general setting we will show that  $C_\cv(U)$ and $P_\scc(U)$ coincide only for parallelepipeds.
\begin{theorem} Let $U\in\mathcal{U}(n,m)$. Then 
  $C_\cv(U)=P_\scc(U)$ if and only if  $m=2n$ and up to renumbering we have
  $u_{n+i}=-u_i$, $1\leq i\leq n$.
\label{thm:equality}  
\end{theorem}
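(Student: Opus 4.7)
I handle both directions of the equivalence.

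\emph{``If'' direction.} For $U = U^s$ (with $u_{n+i} = -u_i$), the unique partition into irreducible subsets is $U^s = \{u_1,-u_1\} \cup \cdots \cup \{u_n,-u_n\}$, and each of the $n$ rank-one pieces satisfies $C_\cv(\{u_j,-u_j\}) = P_\scc(\{u_j,-u_j\}) = \conv\{e_j,e_{n+j}\}$; the direct-sum formulas of Propositions \ref{prop:cvdirect} and \ref{prop:sccdirect} then give $C_\cv(U^s) = P_\scc(U^s)$. This is the continuation of Example \ref{ex:parallel} just presented.

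\emph{``Only if'' direction.} Suppose $C_\cv(U) = P_\scc(U)$. By the same two direct-sum descriptions applied to the unique irreducible partition $U = S_1 \cup \cdots \cup S_d$, the equality descends to each component: $C_\cv(S_j) = P_\scc(S_j)$ inside $\lin S_j$. A rank-one irreducible $S_j$ must be of the form $\{v,-v\}$ (the only unit-vector subset of a line whose positive hull is the full line), so the task reduces to showing that no irreducible $S$ with $\rg S \geq 2$ can satisfy $C_\cv(S) = P_\scc(S)$. Granting this, each $S_j$ has rank $1$, forcing $d = n$, $m = 2n$, and the parallelepiped structure of $U$ up to renumbering.

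To prove the key reduction, fix an irreducible $S \subseteq U$ with $k = \rg S \geq 2$. I will exhibit $\gamma \in C_\cv(S) \setminus P_\scc(S)$ by constructing a polytope whose cone-volume vector violates the subspace concentration condition. Pick $u \in S$ and set $L = \lin u$. The flat $S \cap L$ is either $\{u\}$ or $\{u,-u\}$; irreducibility ($\mathcal F(S) = \emptyset$) means this flat is not a separator, so by Lemma \ref{lem:relintscc} the bound $\sum_{u_i \in L}\gamma_i \leq 1/k$ holds on $P_\scc(S)$, strictly in the relative interior. I produce a polytope with the opposite strict inequality by starting from any full-dimensional unit-volume $P(S,b_0)$ and deforming it to concentrate cone-volume on the $\pm u$-facets. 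Concretely, translating the origin inside $P(S,b_0)$ far in the $-u$-direction (and, if needed, rescaling $b$ to restore unit volume) makes the support $b_u$ large while keeping other cone-volumes bounded; in the limit the cone-volumes of $F(\pm u)$ dominate. A continuity/intermediate-value argument then yields a polytope with $\sum_{u_i \in L}\gamma_i > 1/k$, producing the desired $\gamma$.

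\textbf{Main obstacle.} The main obstacle is the last construction, i.e., verifying that in a general irreducible $S$ of rank $\geq 2$ the cone-volume can indeed be concentrated on the $\pm u$-facets past the $1/k$-threshold. For $k = 2$ this follows from the planar analyses of Stancu \cite{stancu2002discrete} (see also Example \ref{ex:trapez} continued, where the set $B$ in \eqref{eq:trapeziod-hannes} already contains $\gamma$ with $\gamma_1+\gamma_3 > 1/2$); when $S$ is in general position in any dimension, Theorem \ref{thm:zhu} makes the violation immediate via $e_i \in C_\cv(S)$. The remaining case---higher-rank irreducible $S$ with linear dependencies---is the genuinely non-trivial part, and requires a careful deformation argument respecting the matroid structure of $S$, possibly by restricting to a suitable $2$-dimensional section of $\lin S$ through $u$ where the planar toolkit applies.
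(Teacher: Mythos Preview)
Your ``if'' direction and your reduction to irreducible components via the two direct-sum formulas are sound, and the reduction is a clean organizational device the paper does not use. The genuine gap is exactly where you flag it: for an irreducible $S$ of rank $k\ge 2$ you need to produce some $\gamma\in C_\cv(S)\setminus P_\scc(S)$, and your proposed mechanism---translating the origin in the $-u$-direction to push $\sum_{u_i\in\lin u}\gamma_i$ past $1/k$---does not work as stated. Translation leaves the facet volumes unchanged and merely redistributes the $b_i$'s linearly; for the flat $\{u,-u\}$ the sum $b_u\vol_{k-1}F(u)+b_{-u}\vol_{k-1}F(-u)$ is in fact \emph{invariant} under translation along $\lin u$, so no translation alone can cross the threshold. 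Concretely, take $S=\{\pm e_1,\pm e_2,(e_1+e_2)/\sqrt 2\}$ and $u=e_1$: for the underlying square (with the diagonal inactive) one has $\gamma_{e_1}+\gamma_{-e_1}=1/2$ identically, and activating the diagonal only lowers this ratio. So the choice of $u$ matters, the deformation has to change the shape and not just the origin, and you have not supplied the argument.

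The paper sidesteps this analytic difficulty entirely by a support-counting argument. Since $P_\scc(U)=\tfrac1n P(M_U)$ is a scaled matroid base polytope, every $v\in P_\scc(U)$ has at least $n$ nonzero coordinates. Now suppose $U$ contains a positive basis $S$ of $\R^n$ with $|S|\le 2n-1$; form the $n$-polytope $P(S,b_S)$ (extending $b$ so that $P(U,b)=P(S,b_S)$) and translate a vertex to the origin. At least $n$ of the $b_i$'s then vanish, so the cone-volume vector has at most $|S|-n\le n-1$ nonzero entries and therefore lies outside $P_\scc(U)$. Hence $C_\cv(U)=P_\scc(U)$ forces every positive basis in $U$ to have cardinality $\ge 2n$; since positive bases have size between $n+1$ and $2n$, they all have size exactly $2n$, and the classification of maximal positive bases gives $S=(V,-V)$ for linearly independent $V$. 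A short combinatorial check then rules out any further column, yielding $U=(V,-V)$. This argument is elementary and complete; it would also slot directly into your irreducible-component framework, replacing the unfinished deformation step.
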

For the (non-symmetric) discrete logarithmic Minkowski problem we do not know necessary and sufficient conditions. 
By a result of Chen et al. \cite{ChenLiZhu2019}, however,  we have the following inclusion.
\begin{theoman}[\protect{\cite[][Thm.~1.1]{ChenLiZhu2019}}] Let $U\in\mathcal{U}(n,m)$. Then it holds
  \begin{equation*}
    C_\cv(U)\cap\R^m_{>0} \supseteq \relint P_\scc(U).
  \end{equation*}
  \label{thm:inclusion}
\end{theoman}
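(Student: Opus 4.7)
The plan is to reduce Theorem \ref{thm:inclusion} to the existence result of Chen, Li and Zhu \cite{ChenLiZhu2019} via Proposition \ref{prop:main}. The theorem is not really an independent geometric statement but rather a clean translation of \cite[Thm.~1.1]{ChenLiZhu2019} into the language of $P_\scc(U)$ and $C_\cv(U)$, so the work is essentially bookkeeping once the \scc-polytope machinery is in place.

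First I would fix $\gamma\in\relint P_\scc(U)$. Since $P_\scc(U)$ lies in the simplex $\{x\in\R^m:\sum_{i=1}^m x_i=1\}$, one has $\sum_i\gamma_i=1$; and from the description of $P_\scc(U)$ as (up to a scaling) the matroid base polytope of the columns of $U$, every coordinate $\gamma_i$ is strictly positive on the relative interior, because every $u_i\in\sph^{n-1}$ is a non-loop and extends to a basis (thanks to $\pos U=\R^n$). Proposition \ref{prop:main} then gives that the discrete measure $\mu(U,\gamma)$ satisfies the \scc.

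Next I would invoke \cite[Thm.~1.1]{ChenLiZhu2019}: any finite positive discrete Borel measure on $\sph^{n-1}$ whose support positively spans $\R^n$ and which satisfies the \scc\ is the cone-volume measure of some polytope $P\subset\R^n$ with $0\in\inter P$. Applied to $\mu(U,\gamma)$, this produces $P$ with $\V_P=\mu(U,\gamma)$. Since $\gamma_i>0$ for every $i$, each $u_i$ belongs to $\supp\V_P$ and is therefore an outer unit facet normal of $P$; conversely every facet normal of $P$ lies in $\supp\V_P=\{u_1,\dots,u_m\}$. Setting $b_i=\max_{x\in P}\langle u_i,x\rangle>0$ for $i=1,\dots,m$, one obtains $P=P(U,b)$ with $b\in\R^m_{>0}$. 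Finally $\vol(P)=\V_P(\sph^{n-1})=\sum_i\gamma_i=1$ and $\gamma(U,b)=\gamma$ (by construction of $b$) yield $\gamma\in C_\cv(U)\cap\R^m_{>0}$.

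The genuine difficulty lies entirely in the Chen--Li--Zhu existence theorem, whose proof rests on a variational argument for a discrete logarithmic Minkowski functional and is far from routine; here it enters as a black box. The only additional obstacle is to confirm that the polytope produced has exactly $\{u_1,\dots,u_m\}$ as outer unit facet normals and no others, so that it really belongs to the family $P(U,\cdot)$; but this is immediate from the positivity of $\gamma$ on the full support and from the identification $\supp\V_P=\{\text{facet unit normals of }P\}$.
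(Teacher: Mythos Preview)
Your proposal is correct and matches the paper's treatment: the paper does not give an independent proof of Theorem~\ref{thm:inclusion} but simply restates \cite[Thm.~1.1]{ChenLiZhu2019} in the language of $P_\scc(U)$ and $C_\cv(U)$, with Proposition~\ref{prop:main} (together with Lemma~\ref{lem:relintscc} for the positivity of $\gamma$) providing the dictionary. Your argument is exactly this translation spelled out, and your handling of the facet-normal identification is the right way to close the bookkeeping.
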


In Section 2 we will also see that both sets have the same dimension (Proposition \ref{prop:sccdirect} and Proposition \ref{prop:cvdirect}).

By definition, for $U\in\mathcal{U}(n,m)$, the cone-volume set $C_\cv(U)$ is a subset of $\{x\in\R^{m}: x\geq 0, \, x_1+x_2+\dots + x_m=1\}$. A result of Zhu shows that it can also be that large.
\begin{theoman}[\protect{\cite[][Thm.]{Zhu2014}}] Let  $U\in\mathcal{U}(n,m)$ be in general position, i.e., any $n$ columns of $U$ are linearly independent. Then 
  \begin{equation*}
    \begin{split} 
      C_\cv(U) \cap\R^m_{>0} &= \left\{x\in\R^{m}:  x>0,\, x_1+x_2+\dots + x_{m}=1\right\}\\ & = \conv\{e_1,\dots,e_m\}\cap\R^m_{>0} .
   \end{split}   
  \end{equation*}
\label{thm:zhu}  
\end{theoman}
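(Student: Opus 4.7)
The inclusion $C_\cv(U) \cap \R^m_{>0} \subseteq \conv\{e_1,\dots,e_m\} \cap \R^m_{>0}$ is immediate: any $\gamma(U,b)$ in that set has strictly positive entries that sum to $\vol(P(U,b))=1$. To realise an arbitrary $\gamma \in \R^m_{>0}$ with $\sum_i \gamma_i = 1$ as a cone-volume vector, my plan is a variational argument along the lines of Zhu's original approach to the log-Minkowski problem. Define
\[
\Phi_\gamma(b) \;=\; \sum_{i=1}^m \gamma_i \log b_i
\]
and consider its supremum over the set of tight volume-one parameters
\[
\mathcal{M}(U) \;=\; \bigl\{ b \in \R^m_{>0} : \vol(P(U,b)) = 1,\; b_i = h_{P(U,b)}(u_i) \text{ for } i=1,\dots,m \bigr\}.
\]
The tightness constraint $b_i = h_{P(U,b)}(u_i)$ is essential, since otherwise inflating unused $b_i$'s would send $\Phi_\gamma$ trivially to $+\infty$ along rays of $b$-parameters representing the same polytope.

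Granted a maximiser $b^* \in \mathcal{M}(U)$ at which every $F_i(b^*)$ is a genuine $(n-1)$-dimensional facet, the standard derivative formula $\partial \vol(P(U,b))/\partial b_i = \vol_{n-1}(F_i(b))$ and the Lagrange multiplier rule give
\[
\frac{\gamma_i}{b_i^*} \;=\; \lambda\,\vol_{n-1}(F_i(b^*)), \qquad i = 1,\dots,m,
\]
for some $\lambda \in \R$. Multiplying through by $b_i^*$ and recognising $b_i^*\vol_{n-1}(F_i(b^*)) = n\,\gamma_i(U,b^*)$ rewrites the relation as $\gamma_i = n\lambda\,\gamma_i(U,b^*)$; summing over $i$ with $\sum_i \gamma_i = 1 = \sum_i \gamma_i(U,b^*) = \vol(P(U,b^*))$ pins down $\lambda = 1/n$, so $\gamma = \gamma(U,b^*)$ and $b^*$ parametrises the sought polytope.

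The main obstacle is the existence of such a maximiser in the interior regime, and this is where the general position hypothesis is used decisively. On the one hand, general position forces every $n$-dimensional $P(U,b)$ to be simple at generic $b$, each vertex being the unique intersection of exactly $n$ facet hyperplanes with coordinates depending linearly on the corresponding entries of $b$ via an invertible submatrix of $U$. Coupled with the volume-one normalisation, this rigidity implies that along any sequence in $\mathcal{M}(U)$ with some $b_i^{(k)} \to \infty$ another component $b_j^{(k)}$ must tend to $0$ (the polytope must elongate in one direction and compress in a complementary one), so $\Phi_\gamma\to-\infty$; the case $b_i^{(k)}\to 0$ alone is handled directly by $\gamma_i>0$. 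This produces a maximiser by compactness. On the other hand, one must exclude boundary maximisers at which some $F_{i_0}(b^*)$ degenerates to a lower-dimensional face (in general position necessarily to a single vertex, since any $\leq n$ of the $u_j$ are linearly independent): at such a point the volume is insensitive to $b_{i_0}$ while $\partial\Phi_\gamma/\partial b_{i_0} = \gamma_{i_0}/b_{i_0}^* > 0$, so passing into the adjacent chamber of the normal fan in which $F_{i_0}$ becomes a genuine facet allows a large increase of $b_{i_0}$ against only a small compensating decrease in the other $b_j$'s forced by the volume constraint, producing a strict gain of $\Phi_\gamma$. This rules out such boundary points, delivers an interior maximiser, and brings us back to the Lagrange calculation of the previous paragraph.
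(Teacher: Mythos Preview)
The paper does not supply its own proof of Theorem~III; it is quoted as Zhu's result, and only the simplex case $m=n+1$ is argued directly (via Minkowski's existence theorem, cf.\ the continuation of Example~\ref{ex:general}). Your proposal is essentially an outline of Zhu's variational method, so in spirit it \emph{is} the intended proof. Two of its steps, however, are not yet in working order.

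\textbf{Compactness.} The assertion ``$b_i^{(k)}\to\infty$ forces some $b_j^{(k)}\to 0$, hence $\Phi_\gamma\to-\infty$'' is not enough: one term tending to $+\infty$ and another to $-\infty$ says nothing about the sum without a rate comparison. What actually has to be shown (and this is the heart of Zhu's argument) is that for $U$ in general position the tight parameter set $\mathcal{M}(U)$ is \emph{bounded}. The mechanism is that a polytope $P(U,b)$ with fixed normals in general position cannot become arbitrarily elongated while keeping all $m$ faces: once the aspect ratio exceeds a threshold depending only on $U$, some facet collapses and one passes to a lower chamber, where the remaining normals again bound the shape. This chamber-by-chamber finiteness is precisely where general position is consumed, and it needs to be stated and proved rather than asserted.

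\textbf{Exclusion of degenerate maximisers.} Your perturbation argument is reversed. If $F_{i_0}(b^*)$ is a vertex, then the chamber in which $F_{i_0}$ becomes an honest facet lies in the direction of \emph{decreasing} $b_{i_0}$, not increasing it. Cutting off the vertex by lowering $b_{i_0}$ by $\varepsilon$ removes a piece of volume $O(\varepsilon^{n})$ (the new facet has area $O(\varepsilon^{n-1})$), so after renormalising to volume one the net change in $\Phi_\gamma$ is
\[
-\,\frac{\gamma_{i_0}}{b_{i_0}^{*}}\,\varepsilon \;+\; O(\varepsilon^{2}) \;<\;0,
\]
which is perfectly consistent with $b^*$ being a maximiser --- no contradiction arises. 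Increasing $b_{i_0}$ instead would raise $\Phi_\gamma$ while leaving the volume unchanged, but it violates the tightness constraint defining $\mathcal{M}(U)$, so that move is not available to you.

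The standard repair is to switch to the dual problem: minimise $\vol(P(U,b))$ over $\{b>0:\Phi_\gamma(b)\ge 0\}$ (no tightness imposed). There, if $F_{i_0}$ were not a facet at a minimiser, one could freely increase $b_{i_0}$ (volume unchanged, constraint only slackened) and then decrease some genuine $b_j$ to restore $\Phi_\gamma=0$, strictly lowering the volume --- a clean contradiction. The Lagrange computation you wrote then goes through verbatim at such an interior minimiser.
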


\begin{remark} With some additional considerations and  a result of Zhu \cite[Theorem 4.3]{Zhu2014}, one can even show that for  $U\in\mathcal{U}(n,m)$ in general position it holds
  \begin{equation*}
     C_\cv(U) = \conv\{e_1,\dots,e_m\}.
   \end{equation*}
\end{remark}   

In general the inclusion in Theorem \ref{thm:inclusion} is strict as
$C_\cv(U)$ might not be convex and even not representable as the finite union of
polytopes (see Section 2). Our main result is that  $C_\cv(U)$ is (at least) a semialgebraic set, i.e., roughly speaking, it can be described by the finite union of sets which are representable by finitely many polynomial inequalities.

\begin{theorem} Let  $U\in\mathcal{U}(n,m)$. Then $C_\cv(U)$ is a semialgebraic set.
  \label{thm:main}  
\end{theorem}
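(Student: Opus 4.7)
The strategy is to stratify the parameter space $\R^m_{\geq 0}$ for the right-hand side $b$ into finitely many polyhedral chambers on which the combinatorial type of $P(U,b)$ is constant, to observe that $\gamma(U,\cdot)$ and $\vol(P(U,\cdot))$ are polynomials in $b$ on each chamber, and then to conclude via the Tarski--Seidenberg theorem.

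\textbf{Chamber decomposition.} Every vertex of $P(U,b)$ has the form $v_J(b)=(U_J^{\intercal})^{-1}b_J$ for some $J\subseteq\{1,\dots,m\}$ with $|J|=n$ and the columns $(u_j)_{j\in J}$ of $U$ linearly independent, so $v_J(b)$ depends linearly on $b$. The point $v_J(b)$ is actually a vertex of $P(U,b)$ precisely when $u_i^{\intercal}v_J(b)\leq b_i$ for all $i\notin J$. The finitely many linear forms $\ell_{J,i}(b):=u_i^{\intercal}(U_J^{\intercal})^{-1}b_J-b_i$ induce a central hyperplane arrangement in $\R^m$ whose relatively open cells (``chambers'') $R_\sigma$ are polyhedral cones on which each $\ell_{J,i}$ has a fixed sign; on each $R_\sigma$ the combinatorial type of $P(U,b)$ is therefore constant.

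\textbf{Polynomiality on chambers and conclusion.} On a chamber $R_\sigma$ with $P(U,b)$ full-dimensional, fix a triangulation of its face lattice that is compatible with the combinatorial type. Each facet $F_i(b)$ is then subdivided into $(n-1)$-simplices whose vertices are of the form $v_J(b)$, and are thus linear in $b$, with orientation constant on $R_\sigma$. Consequently the $(n-1)$-volume of each simplex is a polynomial of degree $n-1$ in $b$ (a signed determinant, not an absolute value), and summing gives $\vol_{n-1}(F_i(b))$ as a polynomial in $b$ on $\overline{R_\sigma}$. Hence both the components of $\gamma(U,b)$ and $\vol(P(U,b))=\frac{1}{n}\sum_i b_i\vol_{n-1}(F_i(b))$ are polynomials in $b$ of degree at most $n$ on $\overline{R_\sigma}$. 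Set $D_\sigma:=\overline{R_\sigma}\cap\R^m_{\geq 0}\cap\{b\in\R^m:\vol(P(U,b))=1\}$; this is semialgebraic, being the intersection of a polyhedral cone with a polynomial equation. Since $b\mapsto\gamma(U,b)$ is polynomial on $\overline{R_\sigma}$, the Tarski--Seidenberg theorem ensures that $\gamma(U,D_\sigma)\subset\R^m$ is semialgebraic. Finally, $C_\cv(U)=\bigcup_\sigma \gamma(U,D_\sigma)$ is a finite union of semialgebraic sets, and hence semialgebraic.

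\textbf{Main obstacle.} The key technical point is the piecewise polynomial structure on each $\overline{R_\sigma}$: one must construct a coherent triangulation of the face lattice of $P(U,b)$ across the chamber so that the simplex volumes appear as genuine polynomials in $b$ rather than as absolute values of polynomials. This is a classical feature of parametrized polytopes (going back to Brion--Vergne and Lawrence); once it is rigorously set up, the semialgebraic conclusion is a routine application of Tarski--Seidenberg.
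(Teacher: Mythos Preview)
Your proposal is correct and follows essentially the same route as the paper: a finite decomposition of $\R^m_{\geq 0}$ into polyhedral cones on which the combinatorial type of $P(U,b)$ is constant (the paper invokes McMullen's type-cones, you build the hyperplane arrangement of the $\ell_{J,i}$ directly), polynomiality of the facet volumes and hence of $\gamma(U,\cdot)$ on each piece (the paper cites Schneider, you triangulate), and then Tarski--Seidenberg on the image of each piece. The only place where the paper is more explicit is in passing from the open chambers to their closures---the paper handles this via an approximation argument together with a continuity lemma for $b\mapsto\gamma(U,b)$ (their Lemma~2.11 and Lemma~3.2), whereas you implicitly assume the polynomial formula extends to $\overline{R_\sigma}$; this is true by the same continuity, but you may want to say so.
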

In the special case  $n=2$, this was shown already by Stancu \cite{stancu2002discrete},
explicit descriptions of $C_\cv(U)$
for planar quadrilaterals were obtained by Liu et al.~\cite{LiuLuSunEtAl2024}, where the trapezoid case was already studied by Pollehn \cite[Section 2.4]{Subspace_Concentration_of_Geometric_Measures}.
In addition, a general valid polynomial inequality for arbitrary $C_\cv(U)$ was obtained by Böröczky and Heged{\H u}s \cite{BoeroeczkyHegedues2015}. 
Representations related to particular higher dimensional convex bodies were recently studied by Chen, Liu and Xiong (private communication). 

Our general polynomial description reduces to Stancus representation in the planar case. We will also present a bound on the degree of the polynomials in the general case (see Corollary \ref{cor:degree}).

The paper is organized as follows. In Section 2 we will define $P_{scc}(U)$, give a proof of Proposition \ref{prop:main},  show the relation to matroid polytopes, study certain basic properties of the two sets $P_\scc(U)$ and $C_\cv(U)$, and also provide a few examples. In particular, we will also show that $C_\cv(U)$ is path-connected (see Proposition \ref{prop:pathconnected}) and we will provide the proof of Theorem \ref{thm:equality}.  
The proof of Theorem \ref{thm:main} is given in Section 3 where we will also present some necessary background on semialgebraic sets. Section 4 deals with the 2-dimensional case, and in Section 5 we wil briefly discuss the non-uniqueness of cone-volume vectors.

\section{Subspace concentration polytopes and cone-volume sets}
\label{Section:ConeVolumeSet}

In the following let $U\in\mathcal{U}(n,m)$. With $S\subseteq U$ we mean a
subset of the column vectors, and $\rg(S)$ denotes the rang of the
matrix $S$, i.e., $\dim(\lin S)$. We will treat $S\subseteq U$ as
matrix as well as the set consisting of its  column vectors.

Let $\mathcal{B}(U)$ denotes all subsets of $U$
forming a basis of $\R^n$, then the tuple $M_U=(U, \mathcal{B}(U))$
is called the basis matroid of $U$. The associated characteristic polytope
\begin{equation*}
P(M_U) =\conv\left\{ \chi_U(B) : B\in \mathcal{B}(U) \right\}\subset\R^m
\end{equation*}
is called the (basis) matroid polytope of $M_U$.  Here $\chi_U(B)\in\R^m$ is the
characteristic vector of the basis $B$ with respect to $U$, i.e., for $1\leq
i\leq m$ its $i$th entry is $1$ if column $u_i\in B$, otherwise $0$.
For general information on matroids we refer to
\cite{GroetschelLovaszSchrijver1993, Aigner1997}. It is
well-known that $P(M_U)$ can also be described by the following system
of inequalities (see, e.g., \cite[Proposition 2.2]{FeichtnerSturmfels2005})
\begin{equation*}
  \begin{split}
    P(M_U) & = \left\{x\in\R^m : x\geq 0,\, \sum_{i=1}^m x_i=n, \sum_{u_i\in S}
    x_i\leq \rg(S) \text{ for all } S\in \mathcal{L}(U)\right\},
    \end{split}
  \end{equation*}
  where
  \begin{equation*}
  \begin{split}
  \mathcal{L}(U) & = \{ S \subseteq U : 1 \leq \rg(S) \leq n - 1
           \text{ and } U \cap \lin S = S \}.
    \end{split}
\end{equation*}
The subsets in $\mathcal{L}(U)$ are called flats, and for a flat $S$
the associated rank inequality is an (implicit) equality
for $P(M_U)$ if and only if $S$ belongs to the set
\begin{equation*}
    \mathcal{F}(U)  = \{ S \in \mathcal{L}(U) : \lin(S) \cap \lin(U \setminus S) = \{ 0 \} \},
  \end{equation*}
which are the so-called (non-trivial)
separators of the matroid
(see, e.g., \cite[pp. 315]{Aigner1997}, \cite{MR270945}).
  For later purpose and in view of \scc \,ii) we note that
  \begin{equation}
    \begin{split}
      \{\lin S :  S \in \mathcal{F}(U) \} = \{ & L : L\subset\R^n
      \text{ is a proper subspace such that } \\[-1ex]
                                   &\text{ there exits a
                                     complementary}\\[-1ex]
                                   &\text{ subspace }L'\text{ with }
                                   U\subset L\cup L' \}.
                                 \end{split}
    \label{eq:second}
  \end{equation}
With these two sets we define the subspace concentration polytope as
the base matroid polytope scaled by $1/n$: 
\begin{equation}
  \begin{split}
    P_\scc (U)= \frac{1}{n}  P(M_U) \\
              =\Biggl\{ x \in \R^m & : \sum_{i = 1}^m x_i = 1, \sum_{u_i \in S }  x_i = \frac{\rg(S)}{n},  S \in \mathcal{F}(U), \\
    &\, x\geq 0, \sum_{u_i \in  S }  x_i \leq \frac{\rg(S)}{n},\,  S \in
    \mathcal{L}(U) \setminus \mathcal{F}(U) \Biggl\}.
  \end{split}
  \label{def:pscc}
\end{equation}

Next we remark that $P_\scc(U)$ as well as $C_\cv(U)$ are
linear invariant which will be used later on.
\begin{proposition}
\label{prop:linearInvariant}Let $U\in\mathcal{U}(n,m)$ and let
  $A\in\R^{n\times n}$, $\det A\ne 0$.  Then
  $P_\scc(AU)=P_\scc(U)$ and $C_\cv(AU)=C_\cv(U)$.
\label{prop:invariant}
\end{proposition}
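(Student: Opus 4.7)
The two equalities are independent, and for both the plan is to track which objects the defining expressions actually depend on.

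For $P_\scc(AU)=P_\scc(U)$ I would observe that every ingredient of the description \eqref{def:pscc}—the set of bases $\mathcal{B}(U)$, the ranks $\rg(S)$, the flats $\mathcal{L}(U)$, and the separators $\mathcal{F}(U)$—is determined solely by the linear-dependence pattern of the columns of $U$. Since $A$ is invertible, $\lin(AS)=A\,\lin S$ for every $S\subseteq U$, whence $\rg(AS)=\rg(S)$, $AU\cap\lin(AS)=A(U\cap\lin S)$, and $\lin(AS)\cap\lin(AU\setminus AS)=A\bigl(\lin S\cap\lin(U\setminus S)\bigr)$, which vanishes exactly when $\lin S\cap\lin(U\setminus S)=\{0\}$. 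Identifying the columns of $AU$ with those of $U$ via their common index set then yields $\mathcal{B}(AU)=\mathcal{B}(U)$, $\mathcal{L}(AU)=\mathcal{L}(U)$ and $\mathcal{F}(AU)=\mathcal{F}(U)$, so the system \eqref{def:pscc} is literally the same when $U$ is replaced by $AU$.

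For $C_\cv(AU)=C_\cv(U)$ I would set $T:=A^{-\intercal}$. The direct substitution $x\leftrightarrow A^\intercal y$ in $U^\intercal x\leq b$ yields $P(AU,b)=T\cdot P(U,b)$ and consequently $F(Au_i,b)=T\cdot F(u_i,b)$ and $\conv(\{0\}\cup F(Au_i,b))=T\cdot\conv(\{0\}\cup F(u_i,b))$ for every $i$. Under $T$, Lebesgue measure on $\R^n$ rescales uniformly by $|\det T|$, so each pyramid volume and the total volume $\vol P$ pick up exactly the same factor. Dividing by $\vol P$ leaves the cone-volume vector unchanged; passing from any $b\in\R^m_{\geq 0}$ to $|\det A|^{1/n} b$ accounts for the constraint $\vol P=1$ required in the definition of $C_\cv$ and sets up a bijection between the two parametrizing $b$'s that maps cone-volume vector to cone-volume vector.

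The one technical subtlety worth flagging is that $AU$ typically has non-unit columns, so $AU\notin\mathcal{U}(n,m)$ and the formula $\gamma_i(U,b)=\tfrac{b_i}{n}\vol_{n-1}(F_i(b))$—which relies on $\|u_i\|=1$ to identify $b_i$ with the height of the pyramid—cannot be applied to $AU$ verbatim. I would resolve this either by reading $\gamma(AU,b)$ as the intrinsic vector of pyramid volumes $\tfrac{1}{n}\vol\bigl(\conv(\{0\}\cup F_i(b))\bigr)$, or equivalently by first renormalizing: set $\tilde U:=AU\cdot\operatorname{diag}(\|Au_i\|)^{-1}\in\mathcal{U}(n,m)$ and $\tilde b_i:=b_i/\|Au_i\|$, so that $P(\tilde U,\tilde b)=P(AU,b)$. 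Once this reinterpretation is in place, the scaling argument above is the entire proof; the only real content is the matroid invariance observation together with the single identity $P(AU,b)=A^{-\intercal}P(U,b)$.
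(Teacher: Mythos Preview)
Your proof is correct and follows essentially the same route as the paper's: matroid invariance for $P_\scc$, and the identity $P(AU,b)=A^{-\intercal}P(U,b)$ together with the homogeneity of cone volumes for $C_\cv$. Your treatment is in fact more careful than the paper's, which silently ignores the issue you flag---that $AU\notin\mathcal{U}(n,m)$ in general---and simply writes $\gamma(AU,b)=|\det(A^{-\intercal})|\,\gamma(U,b)$ without commenting on the normalization; your renormalization via $\tilde U=AU\cdot\operatorname{diag}(\|Au_i\|)^{-1}$ is exactly the right way to make this precise.
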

\begin{proof} The first identity follows from
  $A\mathcal{B}(U)=\mathcal{B}(AU)$ and \eqref{def:pscc}.
  For the second one we note that
$P(AU,b)=A^{-\intercal}P(U,b)$
and so
\begin{equation*}
  \gamma(AU,b)=|\det(A^{-\intercal})|\,\gamma(U,b)=\gamma(U,
  |\det(A^{-\intercal})|^{1/n}b).
\end{equation*}
Thus, $C_\cv(AU)=C_\cv(U)$.
\end{proof}

For the proof of Proposition \ref{prop:main} it will be convenient
first to give an explicit description of $\relint P_\scc(U)$. It
immediately follows from the above mentioned role of
the separators but for completness sake we add the short proof.

\begin{lemma} Let $U\in\mathcal{U}(n,m)$. Then
  \begin{equation*}
    \begin{split}
\relint P_\scc(U) =   \Biggl\{ x \in \R^m & : \sum_{i = 1}^m x_i = 1, \sum_{u_i \in S }  x_i = \frac{\rg(S)}{n},  S \in \mathcal{F}(U), \\
     &\, x > 0, \sum_{u_i \in  S }  x_i < \frac{\rg(S)}{n},  S \in
     \mathcal{L}(U) \setminus \mathcal{F}(U) \Biggl\}.
 \end{split}
\end{equation*}
\label{lem:relintscc}
\end{lemma}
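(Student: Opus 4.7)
The plan is to invoke the standard description of the relative interior of a polyhedron defined by a mixed system of linear equalities and inequalities: $\relint P$ is obtained by retaining every equality and replacing every inequality by its strict version, provided that inequality is not an \emph{implicit equality} of the system (i.e.\ forced to hold with equality on all of $P$). Applying this principle to the defining system \eqref{def:pscc} of $P_\scc(U)$, the claim reduces to showing (a) that for each $S\in\mathcal{L}(U)\setminus\mathcal{F}(U)$ the rank inequality $\sum_{u_i\in S} x_i\leq \rg(S)/n$ is \emph{not} an implicit equality, and (b) that for each $i\in\{1,\dots,m\}$ the nonnegativity inequality $x_i\geq 0$ is likewise not implicit.

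For (a), I would simply cite the matroid-theoretic characterization of separators already referenced in the paper (cf.\ \cite[pp.~315]{Aigner1997}, \cite{MR270945}): the rank inequality associated with a flat $S$ is tight on all of $P(M_U)$ precisely when $\lin(S)\cap \lin(U\setminus S)=\{0\}$, i.e.\ $S\in\mathcal{F}(U)$. Scaling by $1/n$ transports this to $P_\scc(U)$, so for $S\in\mathcal{L}(U)\setminus\mathcal{F}(U)$ there exists at least one point of $P_\scc(U)$ at which the inequality is strict.

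For (b), I would argue matroidally that every $u_i$ is contained in some basis $B\in\mathcal{B}(U)$. This holds because $\pos U=\R^n$ forces $\lin U=\R^n$, and since $u_i\in\sph^{n-1}$ is nonzero, it can be completed to a basis of $\R^n$ using other vectors of $U$; the corresponding vertex $\tfrac{1}{n}\chi_U(B)$ of $P_\scc(U)$ has positive $i$-th coordinate. Consequently no $x_i\geq 0$ is an implicit equality.

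Finally, to produce a single point witnessing that all the strict inequalities can hold simultaneously, I would take an arbitrary strictly positive convex combination of the vertices of $P_\scc(U)$ (for instance, the centroid). Every coordinate of this point is positive by (b), every rank inequality for $S\in\mathcal{L}(U)\setminus\mathcal{F}(U)$ is strict by (a), and the equalities are preserved by convexity. This point lies in $\relint P_\scc(U)$, and together with the preceding observations gives the claimed equality of sets. The only step requiring outside input is the separator characterization in (a); otherwise the argument is routine.
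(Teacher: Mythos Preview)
Your proposal is correct and follows essentially the same route as the paper: both arguments hinge on showing that (i) each $u_i$ lies in some basis, so $x_i\geq 0$ is never implicit, and (ii) the rank inequality for a flat $S\in\mathcal{L}(U)\setminus\mathcal{F}(U)$ is not implicit, then combining these via a strictly positive convex combination of the vertices. The only real difference is that for (ii) you invoke the separator characterization as a black box (which the paper itself notes, just before the proof, would suffice), whereas the paper supplies a self-contained construction of a basis $\overline{B}$ with $|S\cap\overline{B}|<\rg(S)$ by picking $n-\rg(S)+1$ independent vectors outside $\lin S$; your version is shorter, the paper's is independent of the cited references.
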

\begin{proof} Apparently, the set on the right hand side is a subset
  of $\relint P_{\scc}(U)$. For the reverse inclusion let
  $y\in\relint P_\scc(U)$. Then $y$  admits a representation
  as (cf.~ \eqref{def:pscc},  \cite[Lemam 1.1.12]{Schneider2014})
  \begin{equation*}
    y=\sum_{B\in \mathcal{B}(U)} \lambda_B\,\frac{1}{n}\chi_U(B) \text{
      with }\lambda_B>0 \text{
      for all }  B\in \mathcal{B}(U)  \text{ and } \sum_{B\in \mathcal{B}(U)}\lambda_B=1.
  \end{equation*}
As each vector $u_i\in U$ is contained in some basis $B\in\mathcal{B}(U)$
we have $y>0$. Next, let $S \in
    \mathcal{L}(U) \setminus \mathcal{F}(U)$,
    $\rg(S)=k\in\{1,\dots,n-1\}$.   For each basis $B\in
    \mathcal{B}(U)$ we have
\begin{equation}
    \sum_{u_i\in S}\frac{1}{n}(\chi_U(B))_i =\frac{1}{n}|S\cap B|\leq
    \frac{\rg(S)}{n}
\label{eq:basisineq}
\end{equation}
    and so
    \begin{equation*}
       \sum_{u_i\in S}y_i = \sum_{u_i\in S} \sum_{B\in \mathcal{B}(U)}
       \lambda_B\,\frac{1}{n}(\chi_U(B))_i=  \sum_{B\in
         \mathcal{B}(U)}\lambda_B \sum_{u_i\in S}
       \frac{1}{n}(\chi_U(B))_i \leq \frac{\rg(S)}{n}.
    \end{equation*}
Hence, it suffices to show that there exists at least one basis $\ov
B$ with strict inequality in \eqref{eq:basisineq}: as $S\notin
\mathcal{F}(U)$ we have $\rg(U\setminus S) \geq n-\rg(S)+1$. Since
$S=\lin S\cap U$ 
we can find $n-\rg S +1$ linearly independent vectors $u_{j_i}\in U$
with $u_{j_i}\notin \lin S$. Supplementing these vectors to a basis
from $\mathcal{B}(U)$ gives a desired basis $\ov B$.
\end{proof}

Now we are ready to prove that $\relint P_\scc(U)$ describes the subspace
concentration conditions.
\begin{proof}[Proof of Proposition \ref{prop:main}]
First let us assume that the measure  $\mu(U,\gamma)$, $\gamma>0$,
satisfies the \scc. Then for
$S\in\mathcal{L}(U)$ we have by \scc\, i)
\begin{equation}
  \sum_{u_i\in S} \gamma_i =  \sum_{u_i\in \lin S}\gamma_i\leq
  \frac{\rg S}{n}.
\label{eq:proofprop1}
\end{equation}
Now by \eqref{eq:second} we have $S\in \mathcal{F}(U)$ if and only if
there exists a complementary subspace $L'$ to $L=\lin S$ with
$\{u_1,\dots,u_m\}\subset L\cup L'$. By \scc\,ii) this is equivalent to
having equality in \eqref{eq:proofprop1}.
In view of Lemma \ref{lem:relintscc} we conclude $\gamma\in\relint  P_{\scc}(U)$.

Let now $\gamma\in\relint  P_{\scc}(U)$. Then $\gamma>0$ and let
$L\subset\R^n$ be a proper subspace. With $S_L=U\cap L\in
\mathcal{L}(U)$ we have by Lemma \ref{lem:relintscc}
\begin{equation}
  \sum_{u_i\in L} \gamma_i=  \sum_{u_i\in S_L} \gamma _i\leq
  \frac{\rg (S_L)}{n} \leq \frac{\dim(L)}{n},
\label{eq:proofprop2}
\end{equation}
which shows \scc\, i). Moreover, we have equality in $\sum_{u_i\in L}
\gamma_i\leq \dim(L)/n$  if and only if
$S_L\in\mathcal{F}(U)$ and $\dim S_L=\dim L$  which by \eqref{eq:second} is equivalent to
the existences of a subspace $L'$ complementary to $\lin(S_L)=L$ (cf.~\eqref{eq:proofprop2}) with   $\{u_1,\dots,u_m\}\subset L\cup L'$. Thus \scc\,ii) is
verified as well.
\end{proof}

Before we proceed we have to extend the definitions of
$P(U,b)$, $C_\cv(U)$ and $P_\scc(U)$ to subsets $S\subseteq U$
with $\pos S=\lin S$. We will do this always with respect to the
``ambient matrix'' $U$, i.e., for a vector $v\in\R^{|U|}$ we denote by
$v_S\in\R^{|S|}$ the subvector of $v$ having coordinates $v_i$ with $u_i\in S$.
Then with $\mathcal{B}(S)=\{T\subseteq S: T \text{ basis of }\lin S\}$
we set
\begin{equation*}
  \begin{split}
    P_\scc(S) &=\conv\{\chi_U(T) : T\in \mathcal{B}(S) \}.
  \end{split}
\end{equation*}
With the canonical definitions of $\mathcal{L}(S)$, $\mathcal{F}(S)$
we have
\begin{equation*}
  \begin{split}
    P_\scc(S) & =\Bigg \{ x \in\R^{|U|} : x_{U\setminus S}=0, x_S\geq 0, \sum_{u_i\in
      S}x_i=1,\\
    &\quad\quad\quad  \quad\quad\quad\sum_{u_i\in T} x_i\leq \frac{\rg T}{\rg S} \text{ for }
    T\in\mathcal{L}(S),\\
     &\quad\quad\quad \quad\quad\quad \sum_{u_i\in T} x_i= \frac{\rg T}{\rg S} \text{ for }
    T\in\mathcal{F}(S) \Bigg \}.
  \end{split}
\end{equation*}
Regarding cone-volume sets let
\begin{equation*}
   P(S,b_S)=\{x\in\lin S: S^\intercal x\leq b_s\}.
 \end{equation*}
Let now $\gamma=\gamma(S,b_s)\in\R^{|U|}$  be the cone-volume vector
with $\gamma_{U\setminus S}=0$ and for $u_i\in S$ let $\gamma_i$ be
the associated cone-volume of the $\rg(S)$-dimensional polytope $P(S,b_s)$,
i.e.,
\begin{equation*}
        \gamma_i=\frac{b_i}{\rg S}\vol_{\rg(S)-1}(F_S(u_i)),
\end{equation*}
where $F_s(u_i)=P(S,b_s)\cap\{x\in\lin S : \langle u_i,x\rangle
=b_i\}$. Then we set
\begin{equation*}
  C_\cv(S)=\{\gamma(S,b_s) : b\in\R^{|U|}_{\geq 0} \text{ and }\vol_{\rg S}(P(S,b_s))=1\}.
\end{equation*}

Observe that for $S\in\mathcal{F}(U)$ we always have $\pos S=\lin S$, and with the separators we can write
$P_\scc(U)$ as direct sum of submatroid polytopes.
 In fact, given $S\in\mathcal{F}(U)$ it is
known (e.g., \cite[pp. 315]{Aigner1997}) that
\begin{equation}
                 P_\scc(U)=\frac{\rg(S)}{n}P_\scc(S)\oplus
                 \frac{\rg(U\setminus S)}{n}P_\scc(U\setminus S).
\label{eq:sepsplit}
\end{equation}
Iterating this process, i.e, looking at separators of $S$ and
$U\setminus S$ and so forth leads to a unique partion
\begin{equation}
  U=S_1\cup S_2 \cup \cdots \cup S_d
\label{eq:uniquepart}
\end{equation}
into so-called irreducible sets $S_j\subseteq U$, i.e.,
$\mathcal{F}(S_j)=\emptyset$, $1\leq j\leq d$.  In particular, we have
\begin{equation*}
    \R^n =\lin S_1\oplus \dots \oplus \lin S_d.
\end{equation*}

\begin{proposition} Let $U\in\mathcal{U}(n,m)$ and let $U=S_1\cup S_2 \cup
  \cdots \cup S_d$ be the unique partition into irreducible sets. Then
\begin{equation*}
   P_\scc(U)=\frac{\rg(S_1)}{n}P_\scc(S_1)\oplus  \cdots \oplus \frac{\rg(S_d)}{n}P_\scc(S_d),
 \end{equation*}
and $\dim P_\scc(U)= m-d$.
\label{prop:sccdirect}
\end{proposition}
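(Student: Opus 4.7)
The plan is to establish the direct sum decomposition by iterating \eqref{eq:sepsplit} along the canonical partition \eqref{eq:uniquepart}, and then to compute the dimension by summing the dimensions of the irreducible pieces, using that these pieces are supported on disjoint coordinates.

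For the decomposition I would induct on the number $|U|-d$ of splittings that need to be performed. If $\mathcal{F}(U)=\emptyset$, then $U$ itself is irreducible, so $d=1$, $S_1=U$, and there is nothing to prove. Otherwise, pick any separator $S\in\mathcal{F}(U)$; by \eqref{eq:sepsplit},
\begin{equation*}
P_\scc(U) = \tfrac{\rg(S)}{n} P_\scc(S) \oplus \tfrac{\rg(U\setminus S)}{n} P_\scc(U\setminus S).
\end{equation*}
Applying the inductive hypothesis to the smaller matroids on $\lin S$ and $\lin(U\setminus S)$ yields decompositions of $P_\scc(S)$ and $P_\scc(U\setminus S)$ into irreducible factors; pulling the scalars $\rg(S)/n$ and $\rg(U\setminus S)/n$ through, and telescoping, gives a decomposition of $P_\scc(U)$ of the desired shape. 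Since the irreducible partition \eqref{eq:uniquepart} is unique, the resulting collection of subsets and scalars must coincide with $\{S_1,\dots,S_d\}$ equipped with $\rg(S_j)/n$.

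For the dimension I would show that each irreducible factor $P_\scc(S_j)$ has dimension $|S_j|-1$. Indeed, since $\mathcal{F}(S_j)=\emptyset$, the only explicit equality in the description of $P_\scc(S_j)$ is $\sum_{u_i\in S_j} x_i=1$, while by Lemma \ref{lem:relintscc} every constraint associated with a flat $T\in\mathcal{L}(S_j)\setminus\mathcal{F}(S_j)=\mathcal{L}(S_j)$ is strict on the relative interior. Hence $P_\scc(S_j)$ spans the hyperplane $\{x\in\R^{|S_j|}:\sum_{u_i\in S_j}x_i=1\}$ and has dimension $|S_j|-1$. Because the factors $P_\scc(S_j)$ live in coordinate subspaces indexed by the pairwise disjoint sets $S_j$, their Minkowski (hence direct) sum has additive dimension, and therefore
\begin{equation*}
\dim P_\scc(U)=\sum_{j=1}^d (|S_j|-1)=m-d.
\end{equation*}

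The main obstacle is the combinatorial bookkeeping in the inductive step: one has to verify that the separators of $S$, viewed as a matroid on $\lin S$, correspond via \eqref{eq:second} precisely to the separators $T\in\mathcal{F}(U)$ that are contained in $S$, so that peeling off $S\in\mathcal{F}(U)$ and recursing really produces the canonical irreducible partition \eqref{eq:uniquepart} rather than some spurious refinement. Once this compatibility is checked (using that $\R^n=\lin S\oplus\lin(U\setminus S)$ for $S\in\mathcal{F}(U)$), the recursion is unambiguous and the remainder of the proof is routine.
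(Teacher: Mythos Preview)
Your proposal is correct and follows essentially the same route as the paper: both obtain the decomposition by iterating \eqref{eq:sepsplit} along the partition \eqref{eq:uniquepart}, and both compute the dimension by invoking Lemma \ref{lem:relintscc} on each irreducible $S_j$ (where $\mathcal{F}(S_j)=\emptyset$) to get $\dim P_\scc(S_j)=|S_j|-1$ and then summing over the disjoint coordinate blocks. The only slip is the induction parameter---the number of splittings needed to reach $d$ pieces is $d-1$, not $|U|-d$---but this is cosmetic and does not affect the argument.
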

\begin{proof} The decomposition follows from repeated application of
  \eqref{eq:sepsplit}. For the dimension see \cite[Prop.~
  2.4]{FeichtnerSturmfels2005}, or just observe that
  if $\mathcal{F}(S_j)=\emptyset$ then Lemma \ref{lem:relintscc} implies
  that $\dim P_\scc(S_j)=|S_j|-1$. Together with i) we get $\dim(P_\scc(U))=
  \dim P_\scc(S_1)+\dots + \dim P_\scc(S_d)=|S_1|+\dots+|S_d|-d=m-d$.
\end{proof}
Next we present three examples.
\begin{example}[Polytopes in general positions, e.g.,  a simplex]
  Let $U\in\mathcal{U}(n,m)$ be in general positions, i.e., each $n$ of the
  column vectors are linearly
  independent.  Then $\mathcal{F}(U)=\emptyset$ and for
  $S\in\mathcal{L}(U)$ we have $\rg S= |S|$.  Thus all inequalities
  for $S\in \mathcal{L}(U)$ are dominated by those with $|S|=1$. Hence
  \begin{equation*}
    \begin{split}
     P_\scc(U)& =\left\{x\in\R^{m}: x_1+\dots +x_{m}=1, x\geq 0, x_i\leq
     \frac{1}{n}, 1\leq i\leq m \right\}\\
      & = \frac{1}{n} \Big( [0,1]^{m}\cap\{x\in\R^{m}: x_1+\dots
        +x_{m}=n\} \Big) \\
      &=\frac{1}{n}\conv\left\{\sum_{i\in I} e_i: I\subset\{1,\dots,m\}, |I|=n\right\}.
   \end{split}
 \end{equation*}
So, up to the factor $1/n$, $P_\scc(U)$ is the
hypersimplex $\Delta(n,m)$. \hfill $\triangle$
\label{ex:general}
\end{example}

\begin{example}[Parallelepiped] Let $u_1,\dots,u_n\in \sph^{n-1}$ be
  linearly independent and so $U^s=(u_1,\dots,u_{n},-u_1,\dots,-u_{n})\in\mathcal{U}(n,2n)$.
Then $\mathcal{F}(U)=\mathcal{L}(U) =\{(W,-W) : W\subset
(u_1,\dots,u_n), W\ne\emptyset\}$ and again, all equations resulting
from $\mathcal{F}(U)$ are dominated by those with $|W|=1$.
Hence,
  \begin{equation*}
    \begin{split}
     P_\scc(U^s)& =\{x\in\R^{2n}: x_1+\dots +x_{2n}=1, x\geq 0, x_i+x_{n+i}=
     \frac{1}{n}, 1\leq i\leq n \} \\
      &=\frac{1}{n}\Big(          \conv\{e_1,e_{n+1}\} \oplus\dots\oplus  \conv\{e_n,e_{2n}\}\Big) ,
   \end{split}
 \end{equation*}
 where the direct sum corresponds to the partition of $U^s$ into
the  irreducible sets  $S_j= \{u_j,u_{n+j}\}=\{u_j,-u_j\}$ (cf.~Proposition \ref{prop:sccdirect}). In particular, $P_\scc(U)$ is
a cube of dimension $n$ and of edge length $\sqrt{2}$.
\label{ex:parallel} \hfill $\triangle$
\end{example}

 \begin{example}[Trapezoid] Let $U=(u_1,u_2,u_3,u_4)\in \mathcal{U}(2,4)$,
   with  $u_3=-u_1$, and $\langle u_1,u_2\rangle, \langle
   u_1,u_4\rangle >0$. Then  $\mathcal{F}(U)=\emptyset$,
   $\mathcal{L}(U)=\{(u_1,u_3), (u_2), (u_4)\}$  and
   \begin{equation*}
      \begin{split}
     P_\scc(U)& =\left\{x\in\R^{4}: x_1+\dots +x_{4}=1, x\geq 0,
     x_2,x_4\leq\frac{1}{2}, x_1+x_3\leq \frac{1}{2}\right\} \\
      &=\conv\left\{(1,1,0,0)^\intercal, (1,0,0,1)^\intercal, (0,1,1,0) ^\intercal,
      (0,1,0,1) ^\intercal, (0,0,1,1)^\intercal\right\}.
   \end{split}
 \end{equation*}
Observe, that out of the 6 possible bases among 4 vectors only
$u_1,u_3$ do not build a basis. It is $\dim P_\scc(U)=3$ and $P_\scc(U)$ is
a pyramid over a square with apex $(0,0,1,1)^\intercal$. \hfill $\triangle$
\label{ex:trapez}
 \end{example}

We further remark that the vertices of $P_\scc(U)$ are exactly the
vectors $\chi_U(B)$, $B\in\mathcal{B}$, and all
edges are parallel to a vector of the type $e_i-e_j$. \cite{FeichtnerSturmfels2005}

As well-understood as $P_\scc(U)$ is, the cone-volume set $C_\cv(U)$ is equally mysterious is (in general). But first we note that we also have a decomposition as in
Proposition \ref{prop:sccdirect}.
\begin{proposition} Let $U\in\mathcal{U}(n,m)$ and let $U=S_1\cup S_2 \cup
  \cdots \cup S_d$ be the unique partition into irreducible sets. Then it holds
\begin{equation}
   C_\cv(U)=\frac{\rg(S_1)}{n}C_\cv(S_1)\oplus \cdots \oplus\frac{\rg(S_d)}{n}C_\cv(S_d),
\label{eq:cvdirect}
 \end{equation}
 and $\dim(C_\cv(U))=m-d$.
\label{prop:cvdirect}
\end{proposition}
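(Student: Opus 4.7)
My plan is to use the linear invariance of Proposition \ref{prop:invariant} to reduce to a situation where $P(U,b)$ has a literal Cartesian product structure, and then read off both the direct sum \eqref{eq:cvdirect} and the dimension from that structure. Since $\R^n = \lin S_1 \oplus \cdots \oplus \lin S_d$, I would choose an invertible $A\in\R^{n\times n}$ that sends each $\lin S_j$ onto a coordinate subspace $V_j\subset\R^n$ of dimension $n_j := \rg(S_j)$, so that $V_1\oplus\cdots\oplus V_d$ becomes an orthogonal decomposition. By Proposition \ref{prop:invariant} we have $C_\cv(U) = C_\cv(AU)$, and a short verification shows that further rescaling the columns of $AU$ back to unit length only reparametrizes $b$ without changing the pyramid $\conv(\{0\}\cup F_i)$ that defines $\gamma_i$, so $C_\cv$ is preserved as well. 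Thus I may assume the columns in $S_j$ are unit vectors lying in pairwise orthogonal coordinate subspaces $V_j$.

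Under this reduction, for $b\in\R^m_{\ge 0}$ the inequalities $\langle u_i,x\rangle\le b_i$ with $u_i\in S_j$ constrain only the $V_j$-component of $x$, and therefore
\[
P(U,b) \;=\; \prod_{j=1}^d P(S_j,b_{S_j}), \qquad \vol(P(U,b)) \;=\; \prod_{j=1}^d \vol_{n_j}(P(S_j,b_{S_j})).
\]
For $u_i\in S_j$ the facet decomposes analogously, $F_i(b) = F_i(S_j,b_{S_j})\times\prod_{k\ne j}P(S_k,b_{S_k})$, and the pyramid formula $\gamma_i(U,b) = (b_i/n)\vol_{n-1}(F_i)$ together with $\gamma_i(S_j,b_{S_j}) = (b_i/n_j)\vol_{n_j-1}(F_i(S_j,b_{S_j}))$ yields
\[
\gamma_i(U,b) \;=\; \frac{n_j}{n}\,\gamma_i(S_j,b_{S_j})\cdot\prod_{k\ne j}\vol_{n_k}(P(S_k,b_{S_k})).
\]
Writing $V_j := \vol_{n_j}(P(S_j,b_{S_j}))$, the normalization $\vol(P(U,b)) = \prod_j V_j = 1$ allows me to rescale each block by $b_{S_j}\mapsto V_j^{-1/n_j}b_{S_j}$, which normalizes the $j$-th factor to volume $1$. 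Tracking the scaling behavior of the cone-volumes then identifies $\gamma(U,b)_{S_j}$ with $\frac{n_j}{n}\,\gamma(S_j,V_j^{-1/n_j}b_{S_j}) \in \frac{\rg S_j}{n}C_\cv(S_j)$, which gives the inclusion $\subseteq$ in \eqref{eq:cvdirect}. The reverse inclusion is immediate from the converse construction: given $\eta_j\in C_\cv(S_j)$ realized by some $b^{(j)}$ with $\vol_{n_j}(P(S_j,b^{(j)}))=1$, the concatenated $b$ yields a product polytope of volume $1$ whose cone-volume vector is $\bigoplus_j\tfrac{n_j}{n}\eta_j$.

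For the dimension formula, each irreducible factor satisfies $\mathcal{F}(S_j)=\emptyset$, so Proposition \ref{prop:sccdirect} applied to $S_j$ alone gives $\dim P_\scc(S_j) = |S_j|-1$. Theorem \ref{thm:inclusion} (applied inside $\lin S_j$) then forces $\dim C_\cv(S_j)\ge |S_j|-1$, and since $C_\cv(S_j)$ sits inside the $(|S_j|-1)$-dimensional simplex $\{x\ge 0 : \sum_i x_i = 1\}\subset\R^{|S_j|}$, equality follows. Summing across the direct sum gives $\dim C_\cv(U) = \sum_{j=1}^d(|S_j|-1) = m-d$.

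The step I expect to require the most care is the cone-volume bookkeeping: the factor $n$ in the definition of $\gamma_i(U,b)$ is the ambient dimension, while $\gamma_i(S_j,b_{S_j})$ naturally carries a factor $n_j$, so the discrepancy must be absorbed precisely into the scaling $\rg(S_j)/n$ in the statement. The reduction to orthogonal coordinate subspaces via Proposition \ref{prop:invariant} is conceptually routine but essential, since without it the constraints $\langle u_i,x\rangle\le b_i$ do not decouple across the complementary (but in general non-orthogonal) subspaces $\lin S_j$, and no literal product structure for $P(U,b)$ would be available.
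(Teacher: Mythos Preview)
Your proposal is correct and follows essentially the same approach as the paper: use Proposition~\ref{prop:linearInvariant} to orthogonalize the complementary subspaces $\lin S_j$, read off a Cartesian-product structure for $P(U,b)$, track the factor $n_j/n$ in the cone-volume formula, and obtain the dimension via the sandwich $\relint P_\scc \subseteq C_\cv \subseteq \{\sum x_i = 1\}$ applied to each irreducible factor. The only cosmetic difference is that the paper splits off one separator $S\in\mathcal F(U)$ at a time and iterates, whereas you handle all $d$ components in a single step; the underlying computations are the same.
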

\begin{proof} By Theorem \ref{thm:inclusion} we know
  \begin{equation*}
              \relint P_\scc(U)\subseteq C_\cv(U)\subset \{x\in\R^m :
              x_1+\cdots +x_m=1\},
  \end{equation*}
where the right hand side inclusion follows immediately from the
definition of $C_\cv(U)$. Hence, $\dim(P_\scc(U))\leq
\dim(C_\cv(U))\leq |U|-1$ and if $U$ is irreducible, Proposition
\ref{prop:sccdirect} ii) yields
\begin{equation*}
                \dim(C_\cv(U))=|U|-1.
\end{equation*}
Thus, once we have established \eqref{eq:cvdirect} we obtain
$\dim(C_\cv(U)=m-d$ as in the proof of Proposition \ref{prop:sccdirect}.
In order to show \eqref{eq:cvdirect} let $S\in\mathcal{F}(U)$.
  By \eqref{eq:second} we know that $L=\lin(S)$ and
  $L'=\lin(U\setminus S)$ are complementary subspaces. It suffices to show
  that (see \eqref{eq:uniquepart})
  \begin{equation}
       C_\cv(U)= \frac{k}{n} C_\cv(S)\oplus \frac{n-k}{n}
     C_\cv(U\setminus S),
  \label{eq:toshow}
  \end{equation}
where $k=\dim L$. To this end we may assume by Proposition
\ref{prop:invariant} that $L'=L^\perp$, i.e., $L'$ is
the orthogonal complement of $L$. Then, as $U\subset L\cup L^\perp$ we
can write for any $b\in\R^m_{>0}$ the polytope $P(U,b)$ as the direct sum (see, e.g., \cite[][Lem.~3.1]{henk2014cone},
\cite[][Prop.~3.5]{BoeroeczkyHenk2016a} )
\begin{equation}
  P(U,b)=P(S,b_S)\oplus P(\ov S,b_{\ov S})
 \label{eq:directsumpoly}
     \end{equation}
     with $\ov S=U\setminus S$ and both polytopes are contained in
     orthogonal subspaces.
     For $Q\in\{U,S,\ov S\}$, and $u\in Q$  let
     $F_Q(u)=P(Q,b_Q)\cap\{x\in \lin Q: \langle u,x\rangle =
     b_{\{u\}}\}$ be the possible facet in direction $u$ of $P(Q,b_Q)$.
     Then for $u\in S$, $F_U(u)$ is a facet of $P(U,b)$ if and only if
     $F_S(u)$ is a facet of $P(S,b_s)$ and then $F_U(u)=F_S(u)\oplus P(\ov
     S,b_{\ov S})$. Thus for $u\in S$ we have
     \begin{equation*}
        \frac{b_{\{u\}}}{n}\vol_{n-1}(F_U(u)) = \frac{k}{n}\frac{b_{\{ u\}}}{k}
        \vol_{k-1}(F_S(u))\vol_{n-k}(P(\ov S,b_{\ov S})).
     \end{equation*}
       The same, of course, holds true if we replace $S$ by $\ov
       S$, and so we have
       \begin{equation*}
          \gamma(U,b)= \frac{k}{n} \vol_{n-k}(P(\ov S,b_{\ov
            S}))\gamma(S,b_s) \oplus \frac{n-k}{n} \vol_{k}(P(S,b_{
            S}))\gamma(\ov S,b_{\ov s}).
       \end{equation*}
       With \eqref{eq:directsumpoly} we can write
       \begin{equation*}
         \frac{1}{\vol(P(U,b)}\gamma(U,b) =\frac{k}{n} \frac{1}{\vol_k(P_S,b_S)}
         \gamma(S,b_S) \oplus \frac{n-k}{n} \frac{1}{\vol_{n-k}(P_{\ov
             S},b_{\ov S})} \gamma(\ov S,b_{\ov S}),
       \end{equation*}
and in view of \eqref{eq:scaling}, this shows \eqref{eq:toshow}.
\end{proof}

By Proposition \ref{prop:cvdirect}, Proposition
\ref{prop:sccdirect} and Theorem \ref{thm:inclusion} we have
\begin{equation*}
   \aff C_\cv(U)=\aff P_\scc(U)
\end{equation*}
and thus we get
\begin{corollary}
    \label{PropositionPconeAffineCombinationOfPscc}
    Let $U \in\mathcal{U}(n,m)$, $\gamma\in C_\cv(U)$ and $l=\dim P_{\scc}(U)$. Then
    there exists $\gamma_1,\dots,\gamma_{l+1}\in C_{\cv}(U)\cap \relint
    P_{\scc}(U)$, $\alpha_1,\dots,\alpha_{l+1}\in\R$, $\sum_{i=1}^{l+1}
    \alpha_i=1$, such that
    \begin{equation*}
       \gamma=\sum_{i=1}^{l+1}\alpha_i\,\gamma_i.
    \end{equation*}
\end{corollary}

\noindent
{\it Example} \ref{ex:general} (Polytopes in general positions, e.g.,
simplex){\it continued}. Theorem
\ref{thm:zhu} shows that for $U=(u_1,\dots,u_m)$ in general position
we have
\begin{equation}
  C_\cv(U)\cap\R^m_{>0}=\relint \conv\{e_1,\dots,e_m\}.
\label{eq:general}
\end{equation}
For a simplex, i.e., $m=n+1$, this is easy to see.
To this end we observe that by the existence theorem of Minkowski
(see, e.g., \cite[Theorem 8.2.1]{Schneider2014})
there exists an unique -- up to
translations -- simplex \begin{equation*}
  T(b^*)=\{ x\in\R^n :
  \langle u_i,  x\rangle\leq b_i^*, 1\leq i\leq n+1\}
\end{equation*} with $\vol(T)=1$.  Let $\phi_i$ be the
$(n-1)$-dimensional volume of the facet with outer unit normal vector
$u_i$. For
  $\gamma=(\gamma_1,\dots,\gamma_{n+1})^\intercal
  \in\conv\{e_1,\dots, e_{n+1}\}$, let $b_i=n\,\gamma_i/\phi_i$,
  $1\leq i\leq n+1$. Then  $T(b)$ is a simplex containing the
origin, with $\gamma(U,b)=\gamma$, and thus $\vol(T)=1$.   As we
always have $C_\cv(U)\subseteq \conv\{e_1,\dots,e_m\}$ this gives
\eqref{eq:general} for $m=n+1$. \hfill $\triangle$

\medskip\noindent
{\it Example} \ref{ex:parallel} (Parallelepiped){\it continued}.
Let $S_j=\{u_j,u_{n+j}\}=\{u_j,-u_j\}$, $j=1,\dots,n$, be the irreducible sets
of $U^s$. Then
\begin{equation*}
         C_\cv(S_j)=\conv\{e_j,e_{n+j}\},
\end{equation*}
and with Proposition \ref{prop:cvdirect} we obtain $C_\cv(U^s)=P_\scc(U^s)$.
Note that we consider the general cone-volume set without restricting
to the symmetric case (see Theorem \ref{thm:equality}). \hfill $\triangle$

\medskip\noindent
{\it Example} \ref{ex:trapez} (Trapezoid){\it continued}.
 From
\cite[Theorem 2.14]{Subspace_Concentration_of_Geometric_Measures} we
get that
\begin{equation}
   \begin{split}
   C_\cv(U)&\cap\R^4_{>0}   = \left\{ \gamma \in \R_{> 0}^4: \sum_{i=1}^4 \gamma_i = 1, \gamma_1 + \gamma_3 < \gamma_2 + \gamma_4 \right\}  \\
        & \cup \left\{ \gamma \in \R_{> 0}^4: \sum_{i=1}^4 \gamma_i =
          1, \gamma_1 + \gamma_3 \geq  \gamma_2 + \gamma_4 \geq 2
          \sqrt{\gamma_1 \gamma_3} \text{ and } \gamma_1 < \gamma_3
          \right\}.
        \end{split}
   \label{eq:trapeziod-hannes}
   \end{equation}
   \medskip
\hfill $\triangle$

   Trapezoids also serve as an example in
   order to show the following properties of $C_\cv(U)$.

   \begin{proposition} There exists $U\in\mathcal{U}(n,m)$ such that
   \begin{enumerate}
    \item $C_\cv(U)$ is not convex.
    \item $C_\cv(U)\cap\R^{|U|}_{>0} \not\subseteq \relint\left(C_\cv(U)\right)$,
    \item $C_\cv(U)$ is not closed.

\end{enumerate}
   \end{proposition}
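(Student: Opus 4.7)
The plan is to use the trapezoid $U=(u_1,u_2,u_3,u_4)\in\mathcal{U}(2,4)$ of Example \ref{ex:trapez}, for which \eqref{eq:trapeziod-hannes} gives $C_\cv(U)\cap\R^4_{>0}$ as the union of an open half-simplex and a region bounded by the quadric $(\gamma_2+\gamma_4)^2=4\gamma_1\gamma_3$. Because $C_\cv(U)\subseteq\{\gamma\in\R^4_{\geq 0}:\sum_i\gamma_i=1\}$, every strictly positive candidate point belongs to $C_\cv(U)$ if and only if it lies in the set of \eqref{eq:trapeziod-hannes}, so all three assertions reduce to verifying a handful of inequalities.

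For (i), I would exhibit two points on the boundary surface $\gamma_2+\gamma_4=2\sqrt{\gamma_1\gamma_3}$ of the second piece whose midpoint violates this quadric. Within the hyperplane $\sum_i\gamma_i=1$ this boundary is parameterized by $\sqrt{\gamma_1}+\sqrt{\gamma_3}=1$, and the choices $\sqrt{\gamma_1^{(1)}}=0.1$, $\sqrt{\gamma_1^{(2)}}=0.2$ together with $\gamma_2=\gamma_4=\sqrt{\gamma_1\gamma_3}$ give
\begin{equation*}
\gamma^{(1)}=(0.01,0.09,0.81,0.09),\qquad \gamma^{(2)}=(0.04,0.16,0.64,0.16),
\end{equation*}
both in $C_\cv(U)$. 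Their midpoint $(0.025,0.125,0.725,0.125)$ satisfies $(\gamma_2+\gamma_4)^2=0.0625<0.0725=4\gamma_1\gamma_3$ and $\gamma_1+\gamma_3>\gamma_2+\gamma_4$, hence lies in neither piece of \eqref{eq:trapeziod-hannes}.

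For (iii), I would use $\gamma^{(k)}=(\tfrac14-\tfrac1k,\tfrac14,\tfrac14+\tfrac1k,\tfrac14)$; for large $k$ one checks $\gamma_1^{(k)}<\gamma_3^{(k)}$, $\gamma_1^{(k)}+\gamma_3^{(k)}=\gamma_2^{(k)}+\gamma_4^{(k)}=\tfrac12$ and $2\sqrt{\gamma_1^{(k)}\gamma_3^{(k)}}<\tfrac12$, so $\gamma^{(k)}$ is in the second piece. Its limit $(\tfrac14,\tfrac14,\tfrac14,\tfrac14)$ is outside the first piece (equality, not strict) and outside the second (since $\gamma_1=\gamma_3$), hence not in $C_\cv(U)$.

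For (ii), the point $\gamma^{\star}=(0.04,0.16,0.64,0.16)$ from (i) lies on the quadric boundary and in $C_\cv(U)\cap\R^4_{>0}$. Perturbing within $\aff(C_\cv(U))=\{\sum_i\gamma_i=1\}$ along $(\varepsilon,0,-\varepsilon,0)$ with $\varepsilon>0$ preserves $\gamma_2+\gamma_4$ and $\gamma_1+\gamma_3$ but strictly increases $\gamma_1\gamma_3$ (the product along a sum-constant segment is maximal at $\gamma_1=\gamma_3$), so the perturbed point satisfies $\gamma_2+\gamma_4<2\sqrt{\gamma_1\gamma_3}$ while $\gamma_1+\gamma_3>\gamma_2+\gamma_4$ still holds, and therefore lies outside $C_\cv(U)$. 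Thus no affine neighborhood of $\gamma^{\star}$ sits in $C_\cv(U)$, proving $\gamma^{\star}\notin\relint(C_\cv(U))$. The main technical obstacle is the nonlinear quadric $(\gamma_2+\gamma_4)^2=4\gamma_1\gamma_3$; once its trace on the simplex is parameterized via $\sqrt{\gamma_1}+\sqrt{\gamma_3}=1$, all three witness constructions fall out by elementary AM--GM calculations.
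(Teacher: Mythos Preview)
Your argument is correct. For (i) and (ii) you follow the paper's strategy---use the trapezoid description \eqref{eq:trapeziod-hannes} and locate a witness point on the quadric boundary $\gamma_2+\gamma_4=2\sqrt{\gamma_1\gamma_3}$---and you are in fact more explicit than the paper, which for (i) simply asserts that the second piece $B$ is non-convex and refers to a figure, and for (ii) uses the point $(1/9,2/9,4/9,2/9)$ with the perturbation $(\epsilon,-\epsilon,0,0)$ in place of your $(0.04,0.16,0.64,0.16)$ with $(\epsilon,0,-\epsilon,0)$.

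For (iii) your route genuinely differs. The paper abandons the formula \eqref{eq:trapeziod-hannes} and instead builds explicit degenerating trapezoids $P(U,b_\epsilon)$ whose cone-volume vectors converge to $(1/2,0,0,1/2)$, a point with vanishing coordinates; one then has to argue directly that no trapezoid $P(U,b)$ realises this vector. Your approach stays within the formula: the sequence $\gamma^{(k)}=(\tfrac14-\tfrac1k,\tfrac14,\tfrac14+\tfrac1k,\tfrac14)$ lies in the second piece of \eqref{eq:trapeziod-hannes}, and its limit $(\tfrac14,\tfrac14,\tfrac14,\tfrac14)$ is strictly positive, so \eqref{eq:trapeziod-hannes} applies and immediately excludes it (the first piece fails because $\gamma_1+\gamma_3=\gamma_2+\gamma_4$, the second because $\gamma_1=\gamma_3$). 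This is cleaner: you avoid the polytope computation and the separate verification that the limit is not realisable. The paper's construction, on the other hand, is self-contained and does not lean on the cited description of $C_\cv(U)\cap\R^4_{>0}$.
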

   \begin{proof} We will only present $2$-dimensional examples, which
     can, however, easily be extended to any dimension via Proposition
    \ref{prop:cvdirect}.

     For i) and ii) we take the trapezoid from Example
     \ref{ex:trapez}, and let $A$ be the first set of the
     (disjoint) union
     \eqref{eq:trapeziod-hannes} and $B$ the second one. Obviously,
     $B$ is not convex and Figure \ref{FigurePconeTrapezoid} presents
     a visualization. 
\begin{figure}[bh]
  \centering
  \hfill
        \begin{subfigure}{0.45\textwidth}
        
            \includegraphics[scale = 0.4]{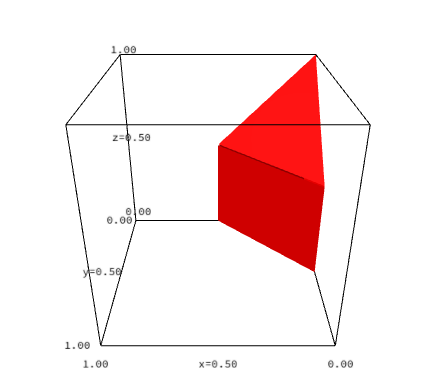}

        \caption{Subset $A$ of $C_\cv(U)\cap\R^4_{>0}$}
      \end{subfigure}
            \hfill
        \begin{subfigure}{0.45\textwidth}
            \includegraphics[scale = 0.4]{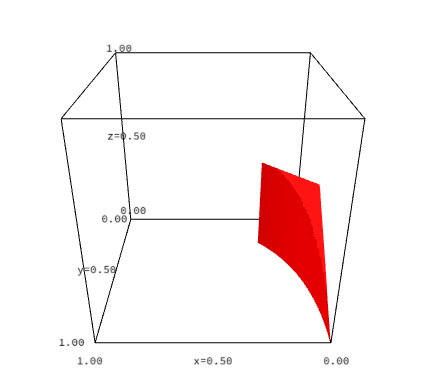}
            \caption{Subset $B$ of $C_\cv(U)\cap\R^4_{>0}$}
          \end{subfigure}
          \hfill
        \caption{The $x-$axis corresponds to $\gamma_1$, the $y-$axis to $\gamma_3$ and the $z-$axis to $\gamma_2$. The corresponding vector in $C_\cv(U)$ is given via the formula $(\gamma_1,\gamma_2,\gamma_3,1-(\gamma_1+\gamma_2+\gamma_3))$.}
        \label{FigurePconeTrapezoid}
    \end{figure}

    For ii) let $\gamma=(1/9,2/9,4/9,2/9)^\intercal$. Then
\begin{equation*}
  \gamma_1+\gamma_3>\gamma_2+\gamma_4=2\sqrt{\gamma_1\,\gamma_3},
  \gamma_1<\gamma_3, \text{ and } \gamma_1+\gamma_2+\gamma_3+\gamma_4=1,
\end{equation*}
and by \eqref{eq:trapeziod-hannes} we have $\gamma\in C_\cv(U)
\cap\R^m_{>0}$. However, for (small)
$\epsilon>0$ the vector
$\ov\gamma=(1/9+\epsilon,2/9-\epsilon,4/9,2/9)^\intercal$ still
satisfies all the above strict inequalities  but as
$\ov\gamma_2+\ov\gamma_4<2\sqrt{\ov\gamma_1\,\ov\gamma_3}$ it is
not contained in  $C_\cv(U)$. Hence, $\gamma\in\bd C_\cv(U)$.

To see iii), i.e., $C_\cv(U)$ is not closed in general we consider for
$U=(e_2,e_1+e_2,-e_2,-e_1+e_2)$ the trapezoids $P_U(b_\epsilon)$ with $b_\epsilon=(\epsilon,0,0,1/\epsilon+\epsilon)^\intercal$.

By elementary calculations we get for the cone-volume vector
$\gamma(\epsilon)=\gamma((U,b_\epsilon)$
\begin{equation*}
         \gamma(\epsilon)_1=\frac{1}{2}\left(1-\frac{\epsilon^2}{2}\right),
         \gamma(\epsilon)_2=\gamma(\epsilon)_3=0, \gamma(\epsilon)_4=\frac{1}{2}\left(1+\frac{\epsilon^2}{2}\right).
\end{equation*}
Hence, $(1/2,0,0,1/2)^\intercal\in\cl(C_\cv(U))$ but apparently there
is no right hand side $b$ such that the trapezoid $P_U(b)$ has this cone-volume vector.
\end{proof}

We remark that for $U\in\mathcal{U}(n,m)$ we always have
\begin{equation*}
  \relint C_\cv(U) \subset C_\cv(U)\cap\R^m_{>0}.
\end{equation*}

Next  we point out that $C_\cv(U)$ is path-connected.
\begin{proposition} Let $U\in\mathcal{U}(n,m)$. Then $C_\cv(U)$ is
  path-connected. 

\label{prop:pathconnected}
\end{proposition}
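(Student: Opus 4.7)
The plan is to connect any two cone-volume vectors $\gamma_0,\gamma_1 \in C_\cv(U)$ by linearly interpolating their defining right-hand sides and then rescaling to unit volume. Choose $b_0,b_1\in\R^m_{\geq 0}$ with $\vol(P(U,b_i))=1$ and $\gamma(U,b_i)=\gamma_i$ for $i=0,1$, and set $b_t=(1-t)b_0+tb_1$ for $t\in[0,1]$; since $b_0,b_1\geq 0$, the interpolant $b_t$ is non-negative. The structural input is the elementary inclusion
\begin{equation*}
    (1-t)P(U,b_0)+tP(U,b_1)\subseteq P(U,b_t),
\end{equation*}
obtained by adding the defining inequalities, together with the Brunn--Minkowski inequality, which yields $\vol(P(U,b_t))^{1/n}\geq (1-t)+t=1$. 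Hence $P(U,b_t)$ remains $n$-dimensional along the whole interpolation.

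Next I would normalize to $\tilde b_t=\vol(P(U,b_t))^{-1/n}\,b_t$, so that $\vol(P(U,\tilde b_t))=1$, and by the scaling relation \eqref{eq:scaling}
\begin{equation*}
    \gamma(U,\tilde b_t)=\vol(P(U,b_t))^{-1}\,\gamma(U,b_t)\in C_\cv(U).
\end{equation*}
This defines a candidate path $t\mapsto\gamma(U,\tilde b_t)$ in $C_\cv(U)$ joining $\gamma_0$ to $\gamma_1$.

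The remaining task is to verify continuity of $t\mapsto\gamma(U,\tilde b_t)$. The assignment $b\mapsto P(U,b)$ is continuous in the Hausdorff metric on $\{b\geq 0\}$ (the polytopes stay uniformly bounded since $\pos U=\R^n$), so $b\mapsto\vol(P(U,b))$ is continuous; by the weak continuity of the surface area measure, each facet-area function $b\mapsto\vol_{n-1}(F_i(b))$ is continuous as well, using that the outer normals $u_1,\dots,u_m$ form a finite set of distinct atoms in $\sph^{n-1}$. Combining these yields continuity of $t\mapsto\tilde b_t$ and of the cone-volume vector along the path.

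The step I expect to be the real obstacle is the guarantee that $P(U,b_t)$ does not degenerate, so that the rescaling $\tilde b_t$ is well-defined and continuous; the Brunn--Minkowski lower bound removes this worry in one stroke. A secondary technical point, continuity of the facet-area maps at parameters where the combinatorial type of $P(U,b)$ jumps, is handled uniformly by the weak continuity of the surface area measure at the finitely many atoms $u_1,\dots,u_m$.
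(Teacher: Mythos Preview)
Your argument is correct and takes a genuinely different route from the paper's proof. The paper argues in two stages: first, for polytopes $P(U,b)$ with all $m$ facets, it translates each polytope so that the centroid sits at the origin (the cone-volume vector changes affinely under translation, giving a segment in $C_\cv(U)$), then invokes \cite{henk2014cone} to conclude that the centroid-centred cone-volume vectors lie in $\relint P_\scc(U)$; convexity of $P_\scc(U)$ together with Theorem~\ref{thm:inclusion} then connects the two centroid vectors inside $C_\cv(U)$. A separate perturbation step handles polytopes that miss some facets. Your approach bypasses all of this: interpolating the right-hand sides and applying Brunn--Minkowski keeps the volume bounded below by $1$, so a single rescaling produces a continuous path directly. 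The only external ingredient you need is the continuity of $b\mapsto\gamma(U,b)$, which the paper records as Lemma~\ref{lem:continuous} and which your surface-area-measure argument also recovers (the key point being that all measures involved are supported on the fixed finite set $\{u_1,\dots,u_m\}$, so weak convergence does imply convergence at each atom). Your proof is more self-contained, since it avoids the Chen--Li--Zhu inclusion and the centroid result; the paper's proof, in exchange, exhibits an explicit piecewise-linear path in cone-volume space that passes through $\relint P_\scc(U)$.
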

\begin{proof} Let $\gamma,\ov\gamma \in C_\cv(U)$ and let $b,\ov
  b\in \R^m$ such that $\gamma=\gamma(U, b)$, $\ov
  \gamma=\gamma(U,\ov b)$.  First we assume that both polytopes
  $P(U,b)$ and $P(U,\ov b)$ have (all) $m$ facets.

  Now let $F_i(b)=P(U,b)\cap\{x\in\R^n: \langle u_i,x\rangle =b_i\}$
  and $\phi_i=\vol_{n-1}(F_i(b))$ for $1\leq i\leq m$. 
 For $t\in\R^n$ we have
  $0\in (t+P(U,b))$  if and only if  $U^\intercal t \geq -b$ and for
  those $t$
  the cone-volume vector $\gamma(t)$ of  $t+P(U,b)$ is given by
  \begin{equation}
         \gamma(t)_i= \gamma_i+\frac{\phi_i}{n}\langle u_i,
         t\rangle,\quad 1\leq i\leq m.
 \label{eq:changecone}
       \end{equation}
  Let $t_0$ be chosen such that the centroid of $t_0+P(U,b)$ is at the
  origin and set $\alpha=\gamma(t_0)$.  From \eqref{eq:changecone} we
  have for $\lambda\in [0,1]$ that
  $(1-\lambda)\,\gamma+\lambda\,\alpha=\gamma(\lambda\,t_0)$ and so
  $\conv\{\gamma,\alpha\}\subset C_\cv(U)$.

  In the same way we choose $\beta$ with respect to $\ov\gamma$. The
  two polytopes $P(U,\alpha)$ and $P(U,\beta)$ have their
  centroids at the origin and by \cite[Thm.~I]{henk2014cone} we know
  $\alpha,\beta\in\relint P_\scc(U)$. As $ P_\scc(U)$ is convex
  and with Theorem \ref{thm:inclusion} we conclude
  $\conv\{\alpha,\beta\}\subset C_\cv(U)$. Hence we have found a path
  inside $C_\cv(U)$ connecting $\gamma$ and $\ov\gamma$.

 Now assume that for $u_1$, say, $F_1(b)$ is not a facet of
 $P(U,b)$, where we may assume that $b_1=\max\{\langle u_1, x\rangle: x\in
 P(U,b)\}$. Moreover, by the above argument, we may assume \(b_1 > 0\); otherwise, we translate \(P(U,b)\) so that \(0 \in \mathrm{int}(P(U,b))\). For $\epsilon>0$ let $b_\epsilon=b-\epsilon e_1$ and
 $F_i(b_\epsilon)=P(U,b_\epsilon)\cap\{x\in\R^n: \langle u_i,x\rangle
 =(b_\epsilon)_i\}$, $1\leq i\leq m$. Then for all sufficiently small
 $\epsilon >0$, $F_i(b_\epsilon)$ is a facet of $P(U,b_\epsilon)$ if
 $F_i(b)$ is a facet of $P(U,b)$, and in addition  $F_1(b_\epsilon)$
 is a facet of $P(U,b_\epsilon)$. Moreover, for small $\epsilon$ the
 $(n-1)$-dimensional volumes of the facets
 as well as  the cone volumes depends on $\epsilon$ in a polynomial way, and
 so does the volume of $P(U,b_\epsilon)$. Hence there exists a path
 in $C_\cv(U)$ connecting $\gamma(U,b)$ and $\gamma(U,b_\epsilon)$ for
 a small positive $\epsilon$. Iterating the process we can always find
 a path in $C_\cv(U)$ connecting  $\gamma(U,b)$ with the cone-volume
 vector of a polytope having all $m$ facets. Together with the first
 discussed case we are done. 
\end{proof}

In the example of a parallelepiped we have seen that for
$U^s\in\mathcal{U}(n,2n)$ we have $C_\cv(U)=P_\scc(U)$. This is also
essentially the only case as claimed in Theorem \ref{thm:equality}.
\begin{proof}[Proof of Theorem \ref{thm:equality}] It remains to prove the necessity part. For a vector $v\in\R^m$ let $|v|_0=|\{i\in\{1,\dots,m\}
  :v_i\ne 0\}|$ be the cardinality of its non-zero coordinates. As 
  $P_\scc(U)$ is (up to scaling) the basis matroid polytope we have
  $|v|_0\geq n$ for any $v\in P_\scc(U)$.

  Let us firstly assume that there
  exists a subset  $S\subseteq U$ with $|S|\leq 2n-1$
  and $\pos S=\R^n$. Then there exists a  $\ov b_S\in\R^{|S|}_{\geq 0}$ such that
  $P(S,\ov b_s)$ is a $n$-dimensional polytope with facets in the
  directions $u\in S$. Now let $b\in\R^m$ such that
   \begin{equation*}
          P(U,b)=P(S,\ov b_S), 
   \end{equation*}
e.g., we set for $u\notin S$, $b_{\{u\}}=\max\{\langle u, x\rangle,
x\in P(S,\bar b_S)\}$. Then
$P(U,b)$ is an $n$-polytope with $2n-1$ facets. Moving $P(U,b)$ such
that a vertex is the origin, yields a polytope $P(U,\widetilde{b})$ with
\begin{equation*}
       |\gamma(U,\widetilde{b})|_0 \leq |S|-n <n.
     \end{equation*}
Hence this shows that if $C_\cv(U)=P_\scc(U)$ then any positive basis
$S$ of $\R^n$ contained in $U$ must have cardinality $\geq 2n$. Here
a set $S$ of vectors
build a positive basis of $\R^n$ if $\pos S=\R^n$ and 
for any strict subset $\bar S\subsetneq S$ we have $\pos \bar S\subsetneq\R^n$.    From the theory of positive bases it is known that we have $n+1\leq |S|\leq
2n$ (see, e.g.,  \cite[Theorem 6.6]{Regis2015a}).

As $\pos U=\R^n$, $U$ contains positive bases and
we have shown that all of them have cardinality $2n$. Let $S\subseteq
U$ be such a basis of cardinality $2n$. By the characterisation of
those maximal positive bases \cite[Theorem 6.3]{Regis2015a} we conclude in our setting that up to
renumbering $S=(V,-V)$, where $V=(v_1,\dots,v_n)$ is a set of $n$
linearly independent unit vectors.  

Suppose there exists an $u\in U\setminus S$ and without loss of
generality let
\begin{equation*}
            u=\sum_{i=1}^l \rho_i v_i 
          \end{equation*}
          with $\rho_i>0$ and $2\leq l\leq n$. Then it is not hard to see that
          \begin{equation*}
            -v_1,\dots,-v_l, u ,\pm v_{l+1},\dots,\pm  v_n
          \end{equation*}
          also build a positive basis, but of cardinality less than $2n$.
Hence, it follows $U=S$.        
\end{proof}

For later purpose we also point out that the cone-volume vectors are
continuous.

\begin{lemma} Let $b^{(j)},b\in\R^m_{\geq 0}$, $j\in\N$,  with
  $\lim_{j\to\infty} b^{(j)} = b$. Then
  \begin{equation*}
    \lim_{j\to\infty} \gamma(U,b^{(j)})= \gamma(U,b).
  \end{equation*}
\label{lem:continuous}
\end{lemma}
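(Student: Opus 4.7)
The plan is to reduce the lemma to two standard continuity facts: Hausdorff continuity of the map $b \mapsto P(U,b)$ and weak continuity of surface area measures of convex bodies. First I would observe that, since $\pos U = \R^n$ and the sequence $b^{(j)}$ is bounded, the polytopes $P(U, b^{(j)})$ all lie in a common compact set; together with $0 \in P(U,b^{(j)}) \cap P(U,b)$ (because $b^{(j)}, b \geq 0$), continuity of the support function as the optimal value of a parametric linear program gives $h_{P(U, b^{(j)})} \to h_{P(U,b)}$ uniformly on $\Sn$, i.e.\ the Hausdorff convergence $P(U, b^{(j)}) \to P(U,b)$.

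Assume first that $\dim P(U,b) = n$; then $\dim P(U, b^{(j)}) = n$ for all $j$ large. The surface area measures $S_{P(U,b)}$ and $S_{P(U,b^{(j)})}$ are discrete, supported on subsets of $\{u_1,\dots,u_m\}$, with atoms equal to the facet volumes $\vol_{n-1}(F_i(\cdot))$ at each $u_i$. Since $u_i \ne u_j$ for $i \ne j$, I can pick $\varepsilon > 0$ so that the closed spherical caps $B(u_i,\varepsilon) \subset \Sn$ are pairwise disjoint. Then $\partial B(u_i,\varepsilon)$ is an $S_{P(U,b)}$-null set, and weak continuity of surface area measures under Hausdorff convergence yields
\begin{equation*}
\vol_{n-1}(F_i(b^{(j)})) \;=\; S_{P(U,b^{(j)})}(B(u_i,\varepsilon)) \;\longrightarrow\; S_{P(U,b)}(B(u_i,\varepsilon)) \;=\; \vol_{n-1}(F_i(b)).
\end{equation*}
Combining with $b^{(j)}_i/n \to b_i/n$ gives $\gamma(U,b^{(j)})_i \to \gamma(U,b)_i$ for every $i$.

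For the degenerate case $\dim P(U,b) < n$, all the $(n-1)$-dimensional facet volumes $\vol_{n-1}(F_i(b))$ vanish, so $\gamma(U,b) = 0$ and $\vol(P(U,b)) = 0$. By Hausdorff continuity and continuity of the volume functional, $\vol(P(U,b^{(j)})) \to 0$, and the identity
\begin{equation*}
\sum_{k=1}^m \gamma(U,b^{(j)})_k \;=\; \vol(P(U,b^{(j)}))
\end{equation*}
squeezes each non-negative coordinate $\gamma(U,b^{(j)})_i$ to $0$. The one nontrivial input is the weak-continuity theorem for surface area measures; once it is in hand, the use of disjoint spherical caps around the fixed directions $u_i$ extracts the pointwise convergence of the facet volumes, and the product $b_i\vol_{n-1}(F_i(\cdot))$ is automatically continuous. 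The main obstacle the argument must address is the possible combinatorial jump where a facet emerges from or collapses into a lower-dimensional face as $b$ crosses a threshold (including the loss of full dimension), and the two-step split above handles exactly this.
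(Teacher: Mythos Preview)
Your argument is correct. Both you and the paper begin with Hausdorff convergence $P(U,b^{(j)})\to P(U,b)$ and dispose of the degenerate case $\vol(P(U,b))=0$ by the same volume-squeeze via $\sum_k\gamma(U,b^{(j)})_k=\vol(P(U,b^{(j)}))\to 0$.

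In the full-dimensional case the routes diverge. The paper splits the directions into the set $S$ of genuine facet normals of $P(U,b)$ and its complement: for $u\in S$ it argues directly that the facets $F(u,b^{(j)})$ converge in the Hausdorff metric to $F(u,b)$, giving $\vol_{n-1}(F(u,b^{(j)}))\to\vol_{n-1}(F(u,b))$; for $u\notin S$ it then reuses the volume identity to squeeze the remaining coordinates to~$0$. You instead invoke weak continuity of surface area measures and the Portmanteau theorem on disjoint spherical caps around the fixed $u_i$, which yields $\vol_{n-1}(F_i(b^{(j)}))\to\vol_{n-1}(F_i(b))$ for \emph{all} $i$ simultaneously, with no case distinction between persisting and collapsing facets. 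Your route is cleaner and more uniform, at the price of importing a heavier theorem (which in essence already packages the facet-convergence the paper proves by hand); the paper's route is more elementary and self-contained. One small point: the claim that $\dim P(U,b^{(j)})=n$ for large $j$ is most cleanly justified via the volume continuity you already use, rather than via the support-function argument alone.
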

\begin{proof} As $b^{(j)}\to b$  we have $P(U,b^{(j)})\to P(U,b)$ in
  the Hausdorff metric. As $\vol(P(U,b^{(j)}))\to \vol(P(U,b))$ it
  suffices to consider the case $\vol(P(U,b))>0$. Hence, $P(U,b)$ has facets and let $S\subseteq
  U$ be the vectors corresponding to these facets. Moreover, for
  $c\in\R^m$ and $u\in U$ let
  \begin{equation*}
     F(u,c)= P(U,c)\cap\{x\in\R^n : \langle u,x\rangle =c_{\{u\}}\}.
  \end{equation*}
  Then for $u\in S$  and 
 for all sufficiently large $j$, $F(u,b^{(j)})$ is a facet of
 $P(U,b^{(j)})$ and  $F(u,b^{(j)})\to F(u,b)$. Thus 
\begin{equation*}
            \lim_{j\to\infty}  \gamma(U, b^{(j)})_{\{u\}}=  \lim_{j\to\infty} \frac{1}{n} b^{(j)}_{\{u\}}\,\vol_{n-1}(F(u,b^{(j)}))=\gamma(U,b)_{\{u\}}.
\end{equation*}
As $\vol(P(U, b^{(j)})) \to\vol(P(U,b))=\sum_{u\in S} \gamma(U,b)_{\{u\}}$ we conclude for $u\in U\setminus S$
\begin{equation*}
            \lim_{j\to\infty}  \gamma(U, b^{(j)})_{\{u\}}= 0=\gamma(U,b)_{\{u\}}.
\end{equation*}
\end{proof}

Finally, we remark that it is also possible to consider instead of the
polytope $P_\scc(U)$ the half open subspace concentration set 
     \begin{equation*}
    \begin{split} 
    \widehat{P_\scc} (U)=  \Biggl\{ x \in \R^m & : \sum_{i = 1}^m x_i = 1, \sum_{u_i \in S }  x_i = \frac{\rg(S)}{n},  S \in \mathcal{F}(U), \\
     &\, x\geq 0, \sum_{u_i \in  S }  x_i < \frac{\rg(S)}{n},\,  S \in
     \mathcal{L}(U) \setminus \mathcal{F}(U) \Biggl\}.
   \end{split}
 \end{equation*}
Then it can be shown that relations in Theorem \ref{thm:blyz} and Theorem
\ref{thm:inclusion} become 
  \begin{align*}
     C^s_\cv(U^s) & =\widehat{P_\scc} (U^s) \cap 
                    \{ x \in \R^m : x_{i } = x_{i + m'},\,1\leq i\leq m' \}, 
    \\ \quad C_\cv(U)&\supseteq \widehat{P_\scc}(U).
   \end{align*}
However, we prefer to work with the closed polytope $P_\scc(U)$.
                                                                                                       
    \section{Semialgebraic Sets and Cone-Volumes}
\label{Section:SemialgebraicSets}
First we recall that a semialgebraic set in $\R^m$ is the finite union of sets of the form
\begin{equation*}
                      \{ x\in\R^m : f_i(x)\geq 0, i\in I, g_j(x)>0, j\in J \}
\end{equation*}
where $I,J\subseteq \N$ are finite and $f_i,g_j\in\R[x]$ are
polynomials. By definition, the finite union of semialgebraic sets is
semialgebraic and by the classical Tarski-Seidenberg principle the
projection of a semialgebraic set is again a semialgebraic set, see
\cite[Theorem 2.2.1.]{bochnak2013real}.

In order to show that $C_\cv(U)$ is a semialgebraic set, we will first focus on the cone-volume vectors of $n$-polytopes $P(U,b)$ which are simple and strongly isomorphic. An $n$-dimensional polytope is called  simple if each vertex is contained in exactly $n$ facets, and two polytopes are strongly isomorphic if their face lattice is isomorphic and the affine hulls of the facets are parallel (cf.~\cite{Schneider2014}). In our setting this implies that if for $b,\ov b\in\R^m_{\geq 0}$ two $n$-polytopes  $P(U,b)$ and $P(U,\ov b)$ are combinatorially isomorphic, i.e., their face lattices are isomorphic, then they are also strongly isomorphic.

Observe, for  a''generic'' right hand side vector $b\in\R^m_{\geq 0}$ the polytope $P(U,b)$ is simple and the general case will be deduced from the simple case by approximation. The next lemma collects some well-known properties of strongly isomorphic simple polytopes.
\begin{lemma} Let $U\in\mathcal{U}(n,m)$.  Then  $\R^m_{\geq 0}$ can be subdivided into finally many polyhedral $m$-dimensional cones $A_1(U), \dots, A_l(U)$ such that
    \begin{enumerate}
    \item    For $b,\ov b\in\mathrm{int}(A_k(U))$ the polytopes $P(U,b)$, $P(U,\ov b)$ are strongly isomorphic, simple and $n$-dimensional.
    \item  There exist polynomials $v_k(y), f_{k,i}(y)\in\R[y_1,\dots,y_m]$, $1\leq k\leq l$, and $1\leq i\leq m$ of degree at most $n$, such that for $b\in  \mathrm{int}(A_k(U))$
      \begin{equation*}
        \begin{split}
          v_k(b &)=\vol(P(U,b)), \\
          f_{k,i}(b&)=\vol_{n-1}\left(P(U,b)\cap\{x\in \R^n: \langle u_i, x\rangle=b_i\}\right).
        \end{split}
      \end{equation*}
    \end{enumerate}
\label{lem:mainsemi}
\end{lemma}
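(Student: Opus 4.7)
The plan is to use a linear hyperplane arrangement in $\R^m$ whose open chambers index the combinatorial types of $P(U,b)$, and then derive the polynomial formulas from triangulations whose vertices depend linearly on $b$. For every $n$-subset $I\subseteq\{1,\dots,m\}$ with $U_I=(u_i)_{i\in I}$ linearly independent, introduce the candidate vertex map $v_I(b)=(U_I^\intercal)^{-1}b_I$, which is linear in $b$, and the linear forms $L_{I,j}(b)=b_j-\langle u_j,v_I(b)\rangle$ for $j\notin I$; the sign of $L_{I,j}(b)$ decides whether $v_I(b)$ satisfies the remaining facet inequality $\langle u_j,x\rangle\leq b_j$. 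Let $\mathcal{H}$ be the finite collection of these hyperplanes $\{L_{I,j}=0\}$ together with the coordinate hyperplanes $\{b_i=0\}$. The closures of the connected components of $\R^m_{\geq 0}\setminus\bigcup\mathcal{H}$ furnish the desired polyhedral cones $A_1(U),\dots,A_l(U)$, which tile $\R^m_{\geq 0}$.

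For (i), fix $k$ and $b^*\in\inter A_k(U)$. Since $b^*>0$, one has $0\in\inter P(U,b^*)$ and $\dim P(U,b^*)=n$. The index set $\mathcal{V}_k=\{I : L_{I,j}(b^*)>0 \text{ for every } j\notin I\}$ does not vary as $b^*$ ranges over $\inter A_k(U)$, and the vertices of $P(U,b)$ are precisely $\{v_I(b):I\in\mathcal{V}_k\}$ for every $b\in\inter A_k(U)$. Since every $L_{I,j}$ stays nonzero on $\inter A_k(U)$, each vertex lies on exactly $n$ facet hyperplanes, so $P(U,b)$ is simple. The vertex-facet incidence pattern is constant on $\inter A_k(U)$, so the face lattices of $P(U,b)$ and $P(U,\ov b)$ are canonically isomorphic for $b,\ov b\in\inter A_k(U)$; together with the fact that the facet in direction $u_i$ always lies in the hyperplane $\{\langle u_i,\cdot\rangle=b_i\}$ with fixed normal $u_i$, this yields strong isomorphism.

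For (ii), fix a combinatorial triangulation of $P(U,b^*)$ (say a pulling triangulation from a distinguished vertex) and transport it across $\inter A_k(U)$ via the vertex maps. Each $n$-simplex $T$ in the triangulation has vertices $v_{I_0}(b),\dots,v_{I_n}(b)$ linear in $b$, hence $\vol_n(T)=\tfrac{1}{n!}|\det M_T(b)|$ with $M_T(b)$ an $n\times n$ matrix of linear forms; the determinant is a polynomial of degree $\leq n$ in $b$ whose sign stays constant on the connected set $\inter A_k(U)$, and summing over $T$ yields $v_k(b)=\vol(P(U,b))$ as a polynomial of degree $\leq n$. For $f_{k,i}$, if $F_i(b)$ is a facet throughout $\inter A_k(U)$, triangulate it consistently and, for each $(n-1)$-simplex $\conv\{p_0(b),\dots,p_{n-1}(b)\}\subset\{\langle u_i,\cdot\rangle=b_i\}$, invoke
\[
  \vol_{n-1}(\conv\{p_0,\dots,p_{n-1}\})=\tfrac{1}{(n-1)!}\big|\det[\,p_1-p_0\,|\cdots|\,p_{n-1}-p_0\,|\,u_i\,]\big|,
\]
which is a polynomial of degree $\leq n-1$ in $b$ of constant sign on $\inter A_k(U)$; if instead $F_i(b)=\emptyset$ throughout $\inter A_k(U)$, then $f_{k,i}\equiv 0$.

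The main obstacle is the combinatorial bookkeeping in (i): one must verify that $\mathcal{V}_k$ really enumerates all vertices of $P(U,b)$ without repetition (coincidences $v_I(b)=v_{I'}(b)$ would force some $L_{I,j}(b)=0$ and are thus excluded on $\inter A_k(U)$), and that a triangulation fixed at a single $b^*$ remains valid for every $b\in\inter A_k(U)$. Once this combinatorial rigidity is in place, parts (i) and (ii) follow from the linearity of the vertex maps $v_I$ and the constancy of sign of polynomial determinants on a connected open region.
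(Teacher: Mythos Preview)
Your approach is correct and more explicit than the paper's. The paper proves (i) by citing McMullen's representation theorem \cite{mcmullen1973representations} for the existence of the type cones, and (ii) by invoking \cite[Lemma~5.1.3]{Schneider2014} (specifically eqs.~(5.6)--(5.7) there) for the polynomiality of facet volumes within a fixed strong-isomorphism class of simple polytopes, combined with the pyramid formula $\vol(P(U,b))=\tfrac{1}{n}\sum_i b_i\vol_{n-1}(F_i(b))$. You instead build the subdivision directly as the chamber complex of the linear arrangement $\{L_{I,j}=0\}\cup\{b_i=0\}$ in $\R^m$, and derive the volume polynomials from a triangulation whose simplex vertices are the linear maps $v_I(b)$. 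Your chambers may strictly refine McMullen's type cones (two adjacent chambers can give the same combinatorial type), but this is harmless for the lemma. The bookkeeping you flag as the ``main obstacle''---that $\mathcal{V}_k$ enumerates the vertices without repetition, and that a pulling triangulation fixed at one $b^*$ remains a triangulation throughout the chamber---is routine once one notes that any coincidence of candidate vertices, any vertex lying on more than $n$ facet hyperplanes, or any simplex of the triangulation degenerating, forces some $L_{I,j}(b)=0$ and is therefore excluded on $\inter A_k(U)$. Your route is self-contained and makes the degree bounds transparent; the paper's is shorter and ties the statement to the classical literature on strongly isomorphic polytopes.
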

\begin{proof} The existence of cones \( A_1(U), \dots, A_l(U) \) satisfying i) follows from McMullen's representation theorem for convex polytopes \cite{mcmullen1973representations}. These cones partition the parameter space into regions corresponding to distinct combinatorial types.

  To address ii), we consider one fixed cone \( A_k(U) \). The interior points  correspond to a fixed specific combinatorial type, and let $S\subseteq U$ be the subset of vectors corresponding to the facets of this combinatorial type.
  Then for $b\in \inter A_k(U)$  we have $P(U,b)=P(S,b_S)$  and we may write
  \begin{equation}
    \vol(P(U,b))=
     \frac{1}{n}\sum_{u\in S} b_{\{u\}}\, \vol_{n-1}(F(u,b)),
\label{eq:sumpoly}
\end{equation}
where
\begin{equation*}
  F(u,b)=P(U,b)\cap\{x\in \R^n: \langle u_i, x\rangle=b_{\{u\}}\}.
\end{equation*}
For $u\in S$, $b_{\{u\}}$ is the so called support number of $P(U,b)=P(S,b_S)$ in direction $u$, i.e.,
\begin{equation*}
   b_{\{u\}}=\sup\{\langle u, x\rangle: x\in P(U,b)\}
\end{equation*}
and for $u\in U\setminus S$ we have $ \vol_{n-1}(F(u,b)) =0$. The proof of \cite[Lemma 5.1.3]{Schneider2014}, more precisely the equations (5.6) and (5.7), now show that for $u\in S$ the volume $\vol_{n-1}(F(u,b))$ is a polynomial  of degree $n-1$ in the coordinates of $b_S$. For $u\in U\setminus S$,  $\vol_{n-1}(F(u,b))$ is just the null polynomial and with \eqref{eq:sumpoly} the assertion follows.
\end{proof}

The cones $A_1(U),\dots,A_l(U)$ are called the type-cones of $U$, cf. \cite{mcmullen1973representations}, and next we investigate them for our running examples.

\medskip\noindent
{\it Example} \ref{ex:general} (Simplex){\it continued}.
Let $U \in\mathcal{U}(n,n+1)$ be in general position. Then every $b\in\R^m_{\geq 0}$, $b\ne 0$,  $P(U,b)$ is an $n$-dimensional simplex. Thus there is only one type cone $A_1(U)=\R^m_{\geq 0}$, and the volume of $P(U,b)$ or of its facets can easily be calculated via determinants, which then are polynomials in the coordinates of $b$.   \hfill$\triangle$

\medskip\noindent
{\it Example} \ref{ex:parallel} (Parallelepiped) {\it continued}. Here again we have only one type-cone $A_1(U^s) = \R_{\geq 0}^m$. This follows from the fact that two $n$-polytopes $P(U^s,b)$ and $P(U^s,\tilde{b})$, where $b, \tilde{b} \in\R^m_{\geq 0}$ are $n$-dimensional parallelepipeds with the same facet directions. The volume polynomial is given by
\begin{equation*}
   \vol(P(U^s,b^s))= \frac{ \prod_{i=1}^n (b_i+b_{n+i})}{ |\det(A)|},
\end{equation*}
where $A\subset U^s$ consists of the first $n$ columns. \hfill$\triangle$



\medskip
\noindent
{\it Example}  \ref{ex:trapez} (Trapezoid) {\it continued}.
We assume that the vectors in  $U$ are ordered counter-clockwise, and
$u_1 = e_2= - u_3$, see Figure \ref{fig:typeConesTrapezoid}. 
\begin{figure}[hbt]
    \centering
\begin{tikzpicture}[scale=0.8]
     \begin{scope}

        \filldraw[color=blue!60, fill = blue!5, very thick](0,0) circle (1);
        \draw[->,very thick](0,0) -- (0,1) node[above]{$u_1$};
        \draw[->,very thick](0,0) -- (-0.94,0.31) node[left]{$u_2$};
        \draw[->,very thick](0,0) -- (0,-1) node[below]{$u_3$};
        \draw[->,very thick](0,0) -- (0.98,0.16) node[right]{$u_4$};
        \draw [red,thick,domain=180:360] plot ({cos(\x)}, {sin(\x)});
        \end{scope}
\hfill
       \begin{scope}[xshift=4.5cm,scale = 0.85]
       \draw[black] (1.18,-1.0)  -- (0.85,1.0)  -- (-0.73,1.0)  -- (-1.39,-1.0)  -- cycle;
       \draw[dotted,thick] (-0.73,1.0) -- (0.59,5.0);
       \draw[dotted,thick] (0.85,1.0) -- (0.19,5.0);
       \draw[dotted,thick] (-2,1.0) -- (-1,1) node[above left]{$H^+(u_1,b_1)$} -- (2,1.0);
       \filldraw[color = red!100, thick](0.31, 4.2) circle (0.08);
       \draw[->,very thick] (0.06,1) -- (0.06,2) node[right]{$u_1$};
       \draw[->,very thick] (1,0) -- (1.98,0.16) node[right]{$u_4$};
       \draw[->,very thick] (-1.05,0) -- (-2.04,0.31) node[left]{$u_2$};

       \end{scope} \hfill
       \begin{scope}[xshift=9cm,scale = 0.5]

       \draw[black] (1.18,-2.0)   -- (-1.39,-2.0)  -- (0.33,4.22)  -- cycle;

       \draw[dotted,thick] (-3,7) node[above]{$H^+(u_1,b_1)$} -- (3,7)  ;
       \draw[->,very thick] (0.5,7) -- (0.5,9) node[right]{$u_1$};

       \draw[dotted,thick] (1.18,-2.0) -- (-0.52,10.44);
       \draw[dotted,thick] (-1.39,-2.0) -- (2.05,10.44);
       \filldraw[color = red!100, thick](0.33,4.22) circle (0.14);
       \draw[->,very thick] (0.75,1.11) -- (2.71,1.43) node[right]{$u_4$};
       \draw[->,very thick] (-0.53,1.11) -- (-2.41,1.73) node[left]{$u_2$};
       \end{scope}
    \end{tikzpicture}
    \caption{Illustration of the two scenarios for type-cones in the trapezoid case, with the outer unit normal vector set $U$ drawn on the left. Center: The intersection of the two non-parallel lines occurs above the line defined by $u_1$, forming a trapezoid. Right: The intersection occurs below the line defined by $u_1$, producing a triangle.}
    \label{fig:typeConesTrapezoid}
\end{figure}
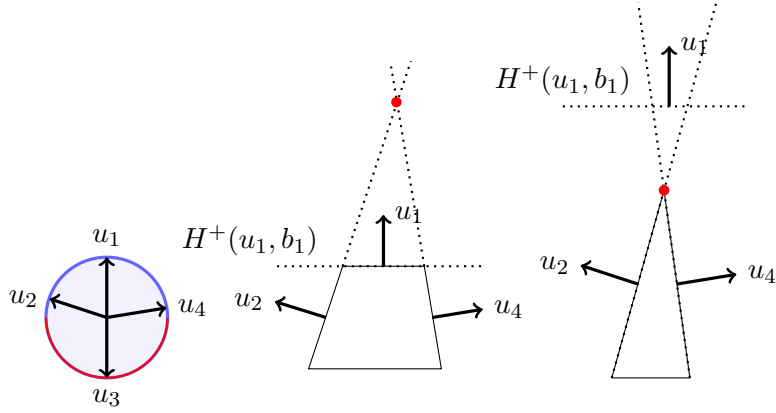
Let $a_2, a_4 \in \R$, $a_4 > 0 > a_2$ such that $u_2 =
\frac{1}{l_2}(-1,-a_2)^T$ and $u_4 = \frac{1}{l_4}(1,a_4)^T$, with $l_2 = \| (-1,-a_2) \|$ and $l_4 = \| (1,a_4)
\|$. For  $b \in \R_{>0}$, the polygon $P = P(U,b)$ is a
$2$-dimensional trapezoid if and only if the intersection point of the lines $H(u_2,b_2)$ and $H(u_4,b_4)$ is above the line  $H(u_1,b_1)$, where $H(u_i,b_i)=\{x\in\R^2 : \langle u_i,x\rangle=b_i\}$. This intersection point is given by
\begin{equation}
    \frac{1}{a_4-a_2} \begin{bmatrix}
        -(l_2 a_4 b_2 + l_4 a_2 b_4) \\
        l_2 b_2 + l_4 b_4
    \end{bmatrix}
\end{equation}
and so we have a $2$-dimensional trapezoid if and only if $l_2 b_2 + l_4 b_4 > (a_4 - a_2) b_1$. Thus, the hyperplane separating the two different type-cones is given by \[H= \{ b \in \R_{\geq 0}^4 : l_2 b_2 + l_4 b_4 - (a_4 - a_2) b_1 = 0 \} \]
and the two different type-cones are
\begin{align*}
    A_1(U) & = \{ b \in \R_{\geq 0}^4 : l_2 b_2 + l_4 b_4 - (a_4 - a_2) b_1 \geq 0 \} \text{ and } \\ A_2(U) & = \{ b \in \R_{\geq 0}^4 : l_2 b_2 + l_4 b_4 - (a_4 - a_2) b_1 \leq 0 \}.
\end{align*}
For $b\in \inter A_1(U)$ we get  $2$-dimensional trapezoids, and $2$-dimensional triangles for $b\in  \inter A_2(U)$ as well as for $b\in\R^4_{>0}\cap H$. \hfill$\triangle$

\vspace{\baselineskip}
In the following, we assume that for $U\in\mathcal{U}(n,m)$ and
 $k\in\{1,\cdots,l\}$ the polyhedral cone  $A_k(U)$ from Lemma \ref{lem:mainsemi} is given by
\begin{equation*}
               A_k(U)=\{b\in\R^m: B_k \cdot b\geq 0\}
\end{equation*}
for some matrix $B_k\in\R^{m_k\times m}$. For the  computation of such a matrix $B_k$, we refer to  \cite{fillastre2017shapes}. In view of Lemma \ref{lem:mainsemi} we set for $1\leq k\leq l$
\begin{equation}
  W_k(U)=\left\{ (\gamma,b) \in\R^m \times \R^m : B_k\,b >0,  v_k(b)=1,
    \gamma_i=  \frac{f_{k,i}(b)\cdot b_i}{n}, 1\leq i\leq m\right\}.
\label{eq:conevolumevectorstype}  
\end{equation}
Observe that for $(\gamma,b)\in  W_k(U)$, we have $b\in\inter A_k(U)$ and so we know by Lemma \ref{lem:mainsemi}  that $P(U,b)$ is a simple polytope with cone-volume vector
\begin{equation*}
                  \gamma(U,b)=\gamma.
\end{equation*}
Apparently, $W_k(U)$ is a semi-algebraic set and they are the main
ingredients of the proof of  Theorem \ref{thm:main} which will
immediately follow from the next lemma.

\begin{lemma} Let $U\in\mathcal{U}(n,m)$. Then
  \begin{equation}
  \left\{   \big(\gamma(U,b), b\big) : b\in\R^m_{\geq 0} \text{ with }\gamma(U,b)\in C_\cv(U)     \right\} =\bigcup_{k=1}^l \cl W_k(U).
\label{eq:mainset}
\end{equation}
\label{lem:lemmasemi}
\end{lemma}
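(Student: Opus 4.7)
The plan is to prove the equality by establishing the two inclusions separately, using the continuity of the cone-volume vector (Lemma \ref{lem:continuous}) together with the polynomial description of volumes on each type-cone (Lemma \ref{lem:mainsemi}) and a simple homogeneity/scaling trick to enforce the unit-volume normalization. The key observation underlying both directions is that for each $k$, the polynomial $v_k$ agrees with the continuous function $b\mapsto\vol(P(U,b))$ on $\inter A_k(U)$, hence by density and continuity they agree on the whole closed cone $A_k(U)$; in addition, $v_k$ is homogeneous of degree $n$, because $P(U,\lambda b)=\lambda P(U,b)$ for $\lambda>0$, so $v_k(\lambda b)=\lambda^n v_k(b)$ first on $\inter A_k(U)$ and then everywhere as a polynomial identity.

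For the inclusion $\supseteq$, I would take $(\gamma,b)\in\cl W_k(U)$ and a sequence $(\gamma^{(j)},b^{(j)})\in W_k(U)$ converging to $(\gamma,b)$. Since $b^{(j)}\in\inter A_k(U)\subseteq\R^m_{\geq 0}$ and $\R^m_{\geq 0}$ is closed, $b\in\R^m_{\geq 0}$. From $v_k(b^{(j)})=1$ and continuity one gets $v_k(b)=1$, and by the agreement of $v_k$ with the volume on $A_k(U)$ this gives $\vol(P(U,b))=1$, so in particular $\gamma(U,b)\in C_\cv(U)$. Next, the defining equality $\gamma^{(j)}_i=f_{k,i}(b^{(j)})b^{(j)}_i/n=\gamma(U,b^{(j)})_i$ combined with Lemma \ref{lem:continuous} (applied to $b^{(j)}\to b$) yields $\gamma=\lim_j\gamma(U,b^{(j)})=\gamma(U,b)$. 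Hence $(\gamma,b)$ belongs to the left-hand side of \eqref{eq:mainset}.

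For the inclusion $\subseteq$, I start from $b\in\R^m_{\geq 0}$ with $\gamma(U,b)\in C_\cv(U)$; as already noted via $\sum_i\gamma(U,b)_i=\vol(P(U,b))$, this forces $\vol(P(U,b))=1$. Using the decomposition $\R^m_{\geq 0}=\bigcup_k A_k(U)$, I choose $k$ with $b\in A_k(U)$. Because $A_k(U)$ is a full-dimensional polyhedral cone, $b\in\cl(\inter A_k(U))$, so there is $b^{(j)}\in\inter A_k(U)$ with $b^{(j)}\to b$. Continuity of $v_k$ gives $v_k(b^{(j)})\to v_k(b)=1$, so $v_k(b^{(j)})>0$ eventually, and setting $\lambda_j=v_k(b^{(j)})^{-1/n}$ we get $\lambda_j\to 1$ and, by $n$-homogeneity, $v_k(\lambda_j b^{(j)})=1$. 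Since $\inter A_k(U)$ is itself a cone, $\lambda_j b^{(j)}\in\inter A_k(U)$, so the pair $(\gamma(U,\lambda_j b^{(j)}),\lambda_j b^{(j)})$ lies in $W_k(U)$. Finally, Lemma \ref{lem:continuous} gives $\gamma(U,\lambda_j b^{(j)})\to\gamma(U,b)$, hence $(\gamma(U,b),b)\in\cl W_k(U)$.

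The proof is fundamentally routine once Lemmas \ref{lem:mainsemi} and \ref{lem:continuous} are in hand; the only point requiring genuine care is the forward direction, where an approximating sequence $b^{(j)}\in\inter A_k(U)$ converging to a boundary point $b$ need not automatically preserve the volume-one constraint in the definition of $W_k(U)$. Resolving this via the $n$-homogeneity of $v_k$ and the scaling $b^{(j)}\mapsto\lambda_j b^{(j)}$ is the only nontrivial ingredient; everything else reduces to continuity arguments and the density of $\inter A_k(U)$ in $A_k(U)$.
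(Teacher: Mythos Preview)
Your proof is correct and follows essentially the same approach as the paper's: both directions rely on approximating $b$ by points in the interior of some type-cone, scaling to enforce unit volume, and invoking Lemma~\ref{lem:continuous}. The only minor difference is that the paper constructs the approximating sequence via the curve $b(\epsilon)=b+(\epsilon,\epsilon^2,\dots,\epsilon^m)$ and a pigeonhole argument to land in a fixed $\inter A_k(U)$, whereas you more directly choose $k$ with $b\in A_k(U)$ and use the density of $\inter A_k(U)$ in $A_k(U)$.
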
 
\begin{proof} First let $b\in\R^m_{\geq 0}$ with  $\gamma(U,b)\in
  C_\cv(U)$, and we show that there exists a $k\in\{1,\dots,l\}$ such that
\begin{equation*}
         \big(\gamma(U,b), b\big)\in \cl W_k(U).
       \end{equation*}
By definition $P(U,b)$ is an $n$-dimensional polytope of volume 1. For $\epsilon\geq 0$ let $b(\epsilon)=b+(\epsilon,\epsilon^2,\cdots,\epsilon^m)$. Then, except for finitely many values of  $\epsilon\in[0,\infty)$ the vectors $b(\epsilon)$ are contained in the interior of the cones $A_i(U)$, $i\in\{1,\dots,l\}$. So there exists a sequence  $\epsilon_j\in (0,\infty)$, $j\in\N$, with $\lim_{j\to\infty} \epsilon_j= 0$ and $b(\epsilon_j)\in \inter( A_k(U))$, say.
  Then $P(U,b(\epsilon_j))\to P(U,b)$ in the Hausdorff metric and so $\vol(P(U,b(\epsilon_j))\to 1$. Hence, with $\widetilde{b}(\epsilon_j)=(\vol(P(U,b(\epsilon_j)))^{-1/n}b(\epsilon_j))\in\inter A_k(U)$  we have
  \begin{equation*}
       \big( \gamma(U, \widetilde{b}(\epsilon_j)), \widetilde{b}(\epsilon_j)\big)\in W_k(U).
     \end{equation*}
From Lemma \ref{lem:continuous} we get
     \begin{equation*}
   \lim_{j\to\infty} \gamma(U, \widetilde{b}(\epsilon_j)) = \gamma(U,b),
 \end{equation*}
 and so $(\gamma(U,b),b))\in\cl W_k(U)$.

 Next for the reverse inclusion let $(\gamma^{(j)},b^{(j)})\in W_k(U)$, $j\in\N$, with $(\gamma^{(j)},b^{(j)})\to  (\gamma,b)$.
  Then $\gamma^{(j)}=\gamma(U,b^{(j)})$ and again by Lemma \ref{lem:continuous} we conclude
  \begin{equation*}
    \gamma(U,b)=\lim_{j\to\infty} \gamma(U,b^{(j)}) =\lim_{j\to\infty}\gamma^{(j)} = \gamma.
  \end{equation*}
\end{proof}

Apparently, Lemma \ref{lem:lemmasemi} 
implies Theorem \ref{thm:main}.

\begin{proof}[Proof of Theorem \ref{thm:main}] Let $\Pi:\R^m\times\R^m\to\R^m$ be the projection $\Pi(x,y)=x$. On account of \ref{lem:lemmasemi}  we have
  \begin{equation}
    C_\cv(U)=\Pi\left(\bigcup_{k=1}^l \cl W_k(U)\right) = \bigcup_{k=1}^l\Pi( \cl W_k(U)).
  \label{eq:coneprojection}
  \end{equation}
As $W_k(U)$ is a semialgebraic set, the Tarski-Seidenberg principle implies that $\cl W_k(U)$ and then $\Pi(\cl  W_k(U))$ are  semialgebraic as well, and so is $C_\cv(U)$.
\end{proof}

\begin{remark} By the Tarski-Seidenberg principle \cite[Theorem 2.2.1.]{bochnak2013real} we also get that $C_\cv(U)\cap\R^m_{>0}$ and $\relint C_\cv(U)$ are semialgebraic.
\end{remark}

\begin{corollary} Let $U\in\mathcal{U}(n,m)$ with $l$ type-cones. The semialgebraic set
  $C_\cv(U)$ can be described by at most $l (2(m+1))^3 n^{m \cdot \mathcal{O}(1)}$ polynomials in $m$ variables of
  degree at most $n^{\mathcal{O}(m^2)}$.  
\label{cor:degree}  
\end{corollary}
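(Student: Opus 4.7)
The plan is to apply an effective, quantitative form of the Tarski--Seidenberg principle to the representation of $C_\cv(U)$ obtained in Lemma~\ref{lem:lemmasemi}. Recall that
\begin{equation*}
C_\cv(U) = \bigcup_{k=1}^l \Pi\left(\cl W_k(U)\right),
\end{equation*}
where $\Pi : \R^{2m} \to \R^m$ projects onto the $\gamma$-coordinates and, by \eqref{eq:conevolumevectorstype}, each $W_k(U) \subseteq \R^{2m}$ is described by at most $m$ strict linear inequalities (from $B_k b > 0$), one degree-$\leq n$ equation $v_k(b) = 1$, and $m$ additional degree-$\leq n$ equations $\gamma_i - f_{k,i}(b) b_i/n = 0$. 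Hence each $W_k(U)$ is defined by at most $s := 2(m+1)$ polynomials of degree at most $d := n$ in $2m$ variables.

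First I would observe that the closure $\cl W_k(U)$ of such a basic semialgebraic set remains semialgebraic and admits a description with only a mild (polynomial) blow-up in the count and degree of the defining polynomials; this contribution is absorbed into the much larger blow-up incurred by the subsequent projection step. Next I would invoke a quantitative single-block quantifier elimination estimate (cf.~\cite{bochnak2013real}): projecting out $m$ variables from a semialgebraic set in $\R^{2m}$ described by $s$ polynomials of degree at most $d$ yields a semialgebraic set described by at most $s^{3}\, d^{m \cdot \mathcal{O}(1)}$ polynomials of degree at most $d^{\mathcal{O}(m^2)}$. Substituting $s = 2(m+1)$ and $d = n$ shows that each $\Pi(\cl W_k(U))$ is describable by at most $(2(m+1))^3\, n^{m \cdot \mathcal{O}(1)}$ polynomials of degree at most $n^{\mathcal{O}(m^2)}$.

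Finally, taking the union over the $l$ type-cones multiplies the polynomial count by $l$ and leaves the degree bound unchanged, yielding the claimed estimate. The main obstacle is selecting an effective projection bound whose exponents match the form $s^{3}\, d^{m\cdot \mathcal{O}(1)}$ in the count and $d^{\mathcal{O}(m^2)}$ in the degree, and verifying carefully that the closure operation does not spoil these bounds; the very specific structure of $W_k(U)$ -- a single volume equation of degree $n$ in $b$ together with $m$ equations that are linear in $\gamma$ and of degree $\leq n$ in $b$ -- suggests that a tailored projection argument could sharpen the constants, but a routine application of quantitative quantifier elimination already suffices to establish the stated bounds.
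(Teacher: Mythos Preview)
Your strategy coincides with the paper's: both feed the description of $W_k(U)$ from \eqref{eq:conevolumevectorstype} into a quantitative quantifier-elimination bound and then take the union over the $l$ type-cones. The one place where the paper is more concrete is the closure step. Instead of asserting that $\cl W_k(U)$ admits a quantifier-free description with ``mild blow-up'' and then projecting, the paper encodes the closure itself as a first-order formula,
\[
\cl W_k(U)=\Bigl\{(x,y):\forall t\in\R\ \exists(p,q)\in\R^{2m}:\bigl[(p,q)\in W_k(U)\ \text{and}\ \|(x,y)-(p,q)\|^2<t^2\bigr]\ \text{or}\ t=0\Bigr\},
\]
which adds only two atomic polynomials (the squared-distance inequality and $t=0$) and two quantifier blocks on top of the data of $W_k(U)$. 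A single application of \cite[Theorem~1]{basu1997improved} to this prenex formula (together with the outer existential block coming from $\Pi$) then yields the stated exponents directly. Your two-stage route---closure first, then a ``single-block'' projection---is valid in principle but requires either a separate complexity result for closures or exactly this rewriting; the paper's choice simply produces the constants $(2(m+1))^3$ and $n^{\mathcal{O}(m^2)}$ in one stroke and pinpoints the reference responsible for them. A minor caveat: your count of ``at most $m$ strict linear inequalities from $B_k b>0$'' is not justified by the text, which only records $B_k\in\R^{m_k\times m}$ without bounding $m_k$.
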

\begin{proof} Notice that we can write
\begin{align}
    \cl(W_k(U))  = \\ \Bigl\{  (x,y) \in \R^m \times \R^m \, : \, \forall t \, \in \R \, \exists (p,q) \in \R^m & \times \R^m 
    :  \big((p,q) \in W_k(U)  \text{ and } \\  & \| (x,y) - (p,q) \|^2  < t^2 \big) \text{ or } [t = 0] \Bigl\}.
\end{align}
Now the bound for the number of polynomials and the degree is a conclusion of this representation together with \cite[Theorem 1]{basu1997improved} and the fact that all polynomials appearing in $W_k(U)$ have degree at most $n$.
\end{proof}

\medskip\noindent
{\it Example} \ref{ex:general} (Simplex) {\it continued}. By the existence theorem of Minkowski all simplices $P(U,b)$
  of volume $1$ are translates of each other. With $\alpha_i=\vol(P(U,e_i))$, $1\leq i\leq n+1$, we conclude
  \begin{equation*}
          \cl W_1(U) = \conv\left\{(e_i, \alpha_i^{-1/n}e_i): 1\leq i\leq n+1\right\}.
  \end{equation*}
\hfill $\triangle$

So in this case $\cl W_1(U)$ is convex, which is, of course, not true in general, as already a parallelepiped shows.

\medskip\noindent
{\it Example} \ref{ex:parallel} (Parallelepiped) {continued}. We
have seen already that the first $n$ coordinates of $\cl(W_1(U^s))$ are convex,
in the sense that its projection onto the first $n$ coordinates maps the set $\cl(W_1(U^s))$ onto $C_{\cv}(U^s)$ which is equal to $P_\scc(U^s)$. However, the last $n$ coordinates do not behave convex: take two right hand sides $b,\tilde{b}\in\R^{2n}_{\geq 0}$ such that the $n$-parallelepipeds $P(U,b)$, $P(U,\tilde{b})$ have volume $1$ but are not homothetic. Then by the Brunn-Minkowski theorem $P(U, \frac{1}{2}b+\frac{1}{2}\tilde{b})$ has volume greater than 1.
\hfill $\triangle$

Generalizing the observation made by the parallelepiped, we obtain the following characterization.

\begin{proposition} Let $U\in\mathcal{U}(n,m)$. Then $\cup_{k=1}^l \cl W_k(U)$ is convex if and only if $m=n+1$.
\end{proposition}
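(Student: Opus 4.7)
\emph{Proof proposal.} For the sufficient direction ($m=n+1$), the continued Example \ref{ex:general} appearing after the proof of Theorem \ref{thm:main} has already shown that there is a single type-cone $A_1(U)=\R^{n+1}_{\ge 0}$ and that $\cl W_1(U)=\conv\{(e_i,\alpha_i^{-1/n}e_i):1\le i\le n+1\}$, which is manifestly convex, so $\bigcup_k \cl W_k(U)=\cl W_1(U)$ is convex.

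For the necessary direction, assume $m>n+1$. The strategy is to exhibit two points in some $\cl W_k(U)$ whose midpoint leaves the union entirely. By Lemma \ref{lem:mainsemi} every type-cone $A_k(U)$ is $m$-dimensional; fix one and pick $b\in\inter A_k(U)$ rescaled so that $v_k(b)=1$. Translations of $P(U,b)$ correspond to the map $t\mapsto b+U^\intercal t$, which is injective (since $\rg U=n$) and preserves volume, so the translation orbit is an $n$-dimensional submanifold of the volume-$1$ hypersurface $H_k=\{b'\in\inter A_k(U):v_k(b')=1\}$. The gradient $\nabla v_k(b)$ is non-zero because $v_k(\lambda b)=\lambda^n v_k(b)$ gives $\nabla v_k(b)\cdot b=n\ne 0$, so by the implicit function theorem $H_k$ has local dimension $m-1$ at $b$. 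Since $m-1>n$, I can find $\tilde b\in H_k$ arbitrarily close to $b$ with $\tilde b\notin b+U^\intercal\R^n$, producing strongly isomorphic, volume-$1$ polytopes $P(U,b)$ and $P(U,\tilde b)$ that are not translates of each other.

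Because $b$ and $\tilde b$ both lie in $\inter A_k(U)$, so does every convex combination; standard support-function arithmetic for strongly isomorphic polytopes then yields $P\bigl(U,\tfrac{1}{2}(b+\tilde b)\bigr)=\tfrac{1}{2}P(U,b)+\tfrac{1}{2}P(U,\tilde b)$. The equality case of the Brunn-Minkowski inequality applies: since the two summands have the same volume $1$ but are not homothetic (same volume rules out any nontrivial dilation factor between translates), $v_k\bigl(\tfrac{1}{2}(b+\tilde b)\bigr)>1$ strictly. Hence the midpoint
\[
M=\bigl(\tfrac{1}{2}(\gamma(U,b)+\gamma(U,\tilde b)),\tfrac{1}{2}(b+\tilde b)\bigr)
\]
of the two points $(\gamma(U,b),b),(\gamma(U,\tilde b),\tilde b)\in\cl W_k(U)$ violates the defining constraint $v_k=1$, so $M\notin\cl W_k(U)$. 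Because the type-cones tile $\R^m_{\ge 0}$ with pairwise disjoint $m$-dimensional interiors, $\tfrac{1}{2}(b+\tilde b)\in\inter A_k(U)$ cannot lie in any other $A_j(U)$, whence $M\notin\cl W_j(U)$ for $j\ne k$ either. Therefore $M\notin\bigcup_k\cl W_k(U)$, contradicting convexity.

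I expect the main obstacle to be the transversality step producing $\tilde b$ on the volume-$1$ hypersurface but off the translation orbit (this is exactly where the hypothesis $m>n+1$ enters, via the dimension count $m-1>n$). Once this perturbation is secured, strict Brunn-Minkowski for non-homothetic bodies and the disjointness of the interiors of the type-cones dispatch the rest routinely.
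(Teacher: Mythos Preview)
Your proof is correct and shares the same core mechanism as the paper's: the strict Brunn--Minkowski inequality for non-homothetic bodies forces the volume of the midpoint polytope above $1$, so the midpoint drops out of every $\cl W_k(U)$. Where you and the paper diverge is in how the existence of a non-translate pair is secured when $m>n+1$. You run a local dimension count: the volume-$1$ hypersurface in $\inter A_k(U)$ has dimension $m-1$, the translation orbit has dimension $n$, and $m-1>n$ lets you pick $\tilde b$ off the orbit. The paper instead argues contrapositively and more globally: if the union were convex then \emph{all} volume-$1$ polytopes $P(U,b)$ would have to be translates, hence share the same facet-area vector, and then Minkowski's existence theorem forces $\dim\ker U=1$, i.e.\ $m=n+1$. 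Your route is more hands-on and stays inside a single type-cone (which makes the support-function identity $P(U,\tfrac12(b+\tilde b))=\tfrac12 P(U,b)+\tfrac12 P(U,\tilde b)$ clean); the paper's route is shorter but leans on Minkowski's theorem as a black box. Both buy the same conclusion with comparable effort.
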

\begin{proof} Assume that $\cup_{k=1}^l \cl W_k(U)$ is convex. Then the Brunn-Minkowski theorem, as used in the example of the parallelepiped, shows that all $n$-polytopes $P(u,b)$, $b\in\R^m_{\geq 0}$, of volume $1$ are translates of each other, and, in particular,  the vectors $(\vol_{n-1}( F(u,b)) : u\in U) \in\R^m$ are the same for all those $b$s. By the existence theorem of Minkowski this implies that $\dim\, \mathrm{kern} U =1$ and so $m=n+1$.   For $m=n+1$  see the example of a simplex.
\end{proof}

In order to improve the representation of the $C_\cv(U)$  as a projection of the sets $\cup_{k=1}^l\cl(W_k(U))$
let us assume that $\mathcal{I}(U)\subseteq\{1,\dots,l\}$ be all indices such for $j\in   \mathcal{I}(U)$
and $b\in\inter A_j(U)$,  $F(u_i,b)$ is a facet of $P(U,b)$ for all $1\leq i\leq m$. In words, $\mathcal{I}(U)$ represents all the type cones $A_k(U)$ where for an interior vector $b$ all vectors $u_i$, $1\leq i\leq m$, are outer unit normal vectors of  facets of $P(U,b)$.

\begin{proposition} Let $U\in\mathcal{U}(n,m)$ and $\Pi :\R^{m}\times \R^m\to\R^m$ be the projection $\Pi(x,y)=x$. Then
  \begin{equation*}
      C_\cv(U)= \bigcup_{k\in \mathcal{I}(U)} \Pi(\cl W_k(U)).
    \end{equation*}
    \label{Proposition:PconeEqualsProjectionOfTypes}
\end{proposition}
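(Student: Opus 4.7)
The forward inclusion $\bigcup_{k\in\mathcal{I}(U)}\Pi(W_k(U))\subseteq C_\cv(U)$ is by unwrapping definitions: for $(\gamma,b)\in W_k(U)$ with $k\in\mathcal{I}(U)$, the condition $B_kb>0$ places $b\in\inter A_k(U)$, so by Lemma \ref{lem:mainsemi}(i) $P(U,b)$ is a simple $n$-polytope, and by the defining property of $\mathcal{I}(U)$ every $u_i$ is an outer normal of a facet of $P(U,b)$. Lemma \ref{lem:mainsemi}(ii) then gives $\vol(P(U,b))=v_k(b)=1$ and $\vol_{n-1}(F(u_i,b))=f_{k,i}(b)$, so $\gamma_i=\tfrac{1}{n}f_{k,i}(b)\,b_i=\gamma(U,b)_i$, whence $\gamma=\gamma(U,b)\in C_\cv(U)$.

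For the reverse inclusion, take $\gamma\in C_\cv(U)$ realised by some $b^\ast\in\R^m_{\geq 0}$ with $\vol(P(U,b^\ast))=1$ and $\gamma(U,b^\ast)=\gamma$. The aim is to produce $k\in\mathcal{I}(U)$ and $\tilde b\in\inter A_k(U)$ with $v_k(\tilde b)=1$ and $\gamma(U,\tilde b)=\gamma$. Adapting the perturbation strategy from the proof of Proposition \ref{prop:pathconnected}, whenever some $u_j$ fails to be an outer normal of a facet of $P(U,b^\ast)$ I would slightly decrease $b^\ast_j$ so that $F(u_j,\cdot)$ becomes an honest facet, while leaving all existing facets intact. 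Iterating over all such indices yields $b'$ in the interior of some type-cone indexed by $\mathcal{I}(U)$. Since this modification changes the cone-volume vector, I would then translate $P(U,b')$ by a vector $t\in\R^n$: the combinatorial type and volume are preserved, while the cone-volume vector shifts affinely according to \eqref{eq:changecone}. A final homothety via \eqref{eq:scaling} renormalises the volume to $1$.

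The crux is to choose the perturbation parameters $\epsilon$ and the translation $t$ so that the composite map hits $\gamma$ \emph{exactly}, rather than merely in the closure as in Lemma \ref{lem:lemmasemi}. I would set this up as a system in $(\epsilon,t)$, affine in $t$ by \eqref{eq:changecone} once $\epsilon$ is fixed, and deduce existence of a solution by an implicit-function-style argument based at the unperturbed configuration, with continuity provided by Lemma \ref{lem:continuous}. The main technical obstacle is ensuring that the resulting $\tilde b=b'(\epsilon)+U^\intercal t$, once rescaled, still lies in $\inter A_k(U)$; this I would verify by openness of the type-cone interior under sufficiently small continuous perturbations of $b^\ast$. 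Once the construction succeeds, $(\gamma,\tilde b)\in W_k(U)$ with $k\in\mathcal{I}(U)$, completing the proof.
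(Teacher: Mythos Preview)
The paper's argument is much shorter because it establishes only the closure version $C_\cv(U)=\bigcup_{k\in\mathcal{I}(U)}\Pi(\cl W_k(U))$; the missing $\cl$ in the displayed statement appears to be a typo, as the paper's own proof text explicitly concludes $(\gamma(U,b),b)\in\cl W_k(U)$ and then invokes Lemma~\ref{lem:lemmasemi}. Concretely, the paper first normalises so that each $b_i$ equals the support value $\max\{\langle u_i,x\rangle:x\in P(U,b)\}$, and then \emph{increases} those $b_j$ for which $u_j$ is already a facet normal by a small $\epsilon>0$. In the enlarged polytope the previously merely supporting hyperplanes now genuinely cut, so all $m$ directions become facet normals; after rescaling to volume $1$ and applying the same generic perturbation as in Lemma~\ref{lem:lemmasemi} one lands in $\inter A_k(U)$ for some $k\in\mathcal{I}(U)$, and Lemma~\ref{lem:continuous} handles the limit $\epsilon\to 0$. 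Your dual perturbation (shrinking the redundant $b_j$'s, borrowed from Proposition~\ref{prop:pathconnected}) would serve this purpose just as well---but nothing beyond that is needed.

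Your proposal instead targets the literal statement without closure, i.e.\ an exact hit $\gamma(U,\tilde b)=\gamma$ with $\tilde b\in\inter A_k(U)$, and this is where it breaks. If $\gamma_j=0$ while $k\in\mathcal{I}(U)$ forces $f_{k,j}(\tilde b)>0$, then necessarily $\tilde b_j=0$; but the cones $A_k(U)$ subdivide $\R^m_{\geq 0}$ (Lemma~\ref{lem:mainsemi}), so this puts $\tilde b$ on the boundary of $A_k(U)$ and $B_k\tilde b>0$ fails. Already the simplex ($m=n+1$, single type cone $A_1(U)=\R^{n+1}_{\geq 0}$) is a counterexample: $e_1\in C_\cv(U)$, yet any realising $b$ has $b_2=\cdots=b_{n+1}=0$. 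Your implicit-function step is also underdetermined: a translation $t\in\R^n$ together with the perturbation parameters generically cannot match an $(m-1)$-dimensional target, and at the unperturbed base point the Jacobian with respect to $t$ degenerates precisely in the directions of the vanishing facet volumes. The forward inclusion in your proposal is fine; for the reverse, drop the translation and implicit-function machinery and aim only for $\cl W_k(U)$.
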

\begin{proof} In view of Lemma \ref{lem:lemmasemi} we just have to show 
that for  $b\in\R^m_{\geq 0}$ with  $\gamma(U,b)\in
  C_\cv(U)$, and we show that there exists a $k\in \mathcal{I}(U)$ such that
\begin{equation*}
         \big(\gamma(U,b), b\big)\in \cl W_k(U).
       \end{equation*}
To this end let $\gamma(U,b)\in C_\cv(U)$ where we may assume that 
  $b_i=\max\{\langle u_i, x\rangle, \linebreak  x\in P\}$.
  Increasing all the $b_i$s which correspond to facets of $P(U,b)$  by any small $\epsilon>0$ gives a polytope $P(U, \overline{b}(\epsilon))$
  where all vectors $u_i$ are now facet vectors and we may also assume $\vol(P(U, \overline{b}(\epsilon))=1$. Now we disturb $\overline{b}(\epsilon)$ a bit as in the proof of Lemma \ref{lem:lemmasemi} and derive at the same conclusion, but now with $k\in \mathcal{I}(U)$.
\end{proof}

\section{Polynomial Inequalities for Polygons}

In this section, we present for $U=(u_1,\dots,u_m)\in\mathcal{U}(2,m)$ a set of polynomials describing the set \(
W_k(U) \). In view of Corollary
\ref{Proposition:PconeEqualsProjectionOfTypes} we will do it only for
$k\in\mathcal{I}(U)$, i.e., we consider only type-cones $A_k(U)$ such
that $F(u_i,b)$, $1\leq i\leq m$, are facets (edges) for all
$b\in\inter A_k(U)$. Since $n=2$ there exists only one such type-cone,
which we will denote by $A(U)$ and the associated set with the cone-volume
vectors will be denoted by $W(U)$ (cf.~\eqref{eq:conevolumevectorstype}).

In order to describe $A(U)$ and $W(U)$ we assume that the 
unit vectors $u_1,\dots,u_m$ are ordered counter-clockwise.
Now  $u_1,\dots,u_m$ are the outer unit normal vectors of  edges of
$P(U,b)$ if and only if the intersection point $v_l$ of the two
(neighbouring) lines
$\{x\in\R^2 : \langle u_{l},x\rangle=b_{l}\}$ and  $\{x\in\R^2 : \langle u_{l+1},x\rangle=b_{l+1}\}$
is a vertex of $P(U,b)$. Here the indices are always calculated
$\bmod\, m$. Moreover, $v_l$ is a vertex of $P(U,b)$ if and only if
\begin{equation}
                 \langle u_i, v_l\rangle < b_i \text{ for } i\in\{1,\dots,m\}\setminus\{l,l+1\}.
\end{equation}
As the coordinates of $v_l$ depends linearly on $b_l, b_{l+1}$ the
inequalities above for $l=1,\dots,m$ describe the interior of
$A(u)$. Hence we have found a representation  $A(U)=\{b\in\R^m  :
B\,b\geq 0\}$ for some $B\in \R^{m(m-2)\times m}$.  

For $b\in\inter A(U)$ the volume of the facets $f_i(b)$, i.e., the length of the edges were already calculated 
by Stancu \cite[Remark 2.1]{stancu2002discrete} and 
we have
\begin{align}
    f_{i}(b) =  \vol_1(F_i(b))  = &- b_i \left( \frac{\langle u_i, u_{i+1} \rangle}{\sqrt{1 - (\langle u_i, u_{i+1} \rangle)^2}} + \frac{\langle u_{i-1}, u_{i} \rangle}{\sqrt{1 - (\langle u_{i-1}, u_{i} \rangle)^2}} \right)  \\
    & + \frac{b_{i+1}}{\sqrt{1 - (\langle u_i , u_{i+1} \rangle)^2}} + \frac{b_{i-1}}{\sqrt{1 - (\langle u_{i-1} , u_{i} \rangle)^2}}.
\end{align}
Along with pyramid formula
\begin{equation}
    \vol(P(U,b))=\sum_{i = 1}^m f_{i}(b) \cdot  \frac{b_i}{2}.
\end{equation}
we have obtained a representation of the set $W(U)$.

Although it is easy to get this description $W(U)$, the computation of
$C_\cv(U)=\Pi(\cl(W(U)))$ remains challenging, as we must eliminate as
many quantifiers as there are columns in $U$. For instance, if we
consider a $U$, the software \textit{Mathematica} \cite{Mathematica}
was unable to generate a quantifier-free output for the left set.
 Even for quadrilaterals the explicit descriptions of $C_\cv(U)$
obtained by  Liu, Lu, Sun and Xiong \cite{LiuLuSunEtAl2024} 
are quite involved.  

\section{On the non-uniquness of cone-volume vectors}

In this section, we briefly study for 
$U \in \mathcal{U}(n,m)$ and  $\gamma \in C_{\cv}(U) \cap \R_{>0}^m$  the set 
\begin{equation*}
    S(U, \gamma) = \{ b \in \R_{\geq 0}^m : \gamma(U, b) = \gamma \}, 
\end{equation*}
consisting of all right hand sides  $b \in \R_{\geq 0}^m$ yielding the same cone-volume vector $b$. In the case of the simplex, i.e., $m=n+1$, the cardinality of this set is clearly $1$ whereas in our example of the parallelepiped it is infinity. Hence, to study its size we use the dimension $\dim^*(S(U,\gamma))$, where  $\dim^*(S(U,\gamma))$ denotes the dimension of $S(U,\gamma)$ as a semialgebraic set, \cite[Definition 2.8.1]{bochnak2013real}. In particular,
the cardinality of $S(U,\gamma)$ is finite if and only if $\dim^*(S(U,\gamma))=0$. The next proposition gives a lower bound on  $\dim^*(S(U,\gamma))$. 

\begin{proposition}
  Let \( U \in \mathcal{U}(n,m) \), 
let $U=S_1\cup S_2 \cup
\cdots \cup S_d$ be the unique partition into irreducible sets and let $\gamma \in C_{\cv}(U) \cap \R_{>0}^m$. 
Then it holds
\begin{equation*}
    \dim^*(S(U, \gamma)) \geq d - 1 = m-\dim(C_\cv(U))-1.
\end{equation*}
\end{proposition}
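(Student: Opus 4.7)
The plan is to produce an explicit $(d-1)$-parameter family of right-hand sides inside $S(U,\gamma)$ by scaling a fixed $b \in S(U,\gamma)$ block-wise along the irreducible decomposition $U = S_1 \cup \cdots \cup S_d$. The equality $d - 1 = m - \dim(C_\cv(U)) - 1$ is immediate from Proposition~\ref{prop:cvdirect}, so only the inequality $\dim^*(S(U,\gamma)) \geq d-1$ needs to be shown. Invoking the linear invariance of Proposition~\ref{prop:invariant} (and absorbing the resulting renormalization of the column vectors into corresponding rescalings of $b$ and $\gamma$), I will assume that $\lin(S_1), \ldots, \lin(S_d)$ are mutually orthogonal. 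Under this assumption $P(U,b)$ splits as the orthogonal direct sum $P(S_1, b_{S_1}) \oplus \cdots \oplus P(S_d, b_{S_d})$.

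The key computation is then a homogeneity identity. Iterating the two-block formula derived in the proof of Proposition~\ref{prop:cvdirect} yields, for every $u_i \in S_j$,
\begin{equation*}
  \gamma(U, b)_i = \frac{\rg(S_j)}{n} \Big(\prod_{k \neq j} \vol_{\rg(S_k)}(P(S_k, b_{S_k}))\Big)\,\gamma(S_j, b_{S_j})_i.
\end{equation*}
Fix any $b \in S(U,\gamma)$ (coordinatewise positive since $\gamma > 0$) and, for $\lambda = (\lambda_1, \ldots, \lambda_d) \in \R_{>0}^d$, set $b'(\lambda) \in \R_{>0}^m$ by $b'(\lambda)_i = \lambda_j b_i$ whenever $u_i \in S_j$. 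Replacing $b_{S_k}$ by $\lambda_k b_{S_k}$ scales $P(S_k, b_{S_k})$ by $\lambda_k$, so $\vol_{\rg(S_k)}(P(S_k, b_{S_k}))$ is multiplied by $\lambda_k^{\rg(S_k)}$ and $\gamma(S_j, b_{S_j})$ by $\lambda_j^{\rg(S_j)}$. Substituting into the displayed formula gives the uniform scaling
\begin{equation*}
  \gamma(U, b'(\lambda)) = \Big(\prod_{k=1}^d \lambda_k^{\rg(S_k)}\Big)\gamma(U, b),
\end{equation*}
so $b'(\lambda) \in S(U,\gamma)$ precisely when $\lambda$ lies on the hypersurface $\Lambda = \{\lambda \in \R_{>0}^d : \prod_k \lambda_k^{\rg(S_k)} = 1\}$.

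Because each $\rg(S_k) \geq 1$, the set $\Lambda$ is a smooth $(d-1)$-dimensional semialgebraic submanifold of $\R_{>0}^d$, and the map $\lambda \mapsto b'(\lambda)$ is polynomial and injective (one recovers $\lambda_j$ as $b'(\lambda)_i / b_i$ for any $u_i \in S_j$, using $b > 0$). Its image thus has dimension $d - 1$ and is contained in $S(U,\gamma)$, yielding $\dim^*(S(U,\gamma)) \geq d - 1$. The step I expect to require the most care is the orthogonalizing reduction at the outset: the linear change of variables simultaneously acts on $U$, on the right-hand side $b$, and (through a determinant factor) on $\gamma$, and one must verify that the induced map between the original and the orthogonalized $S(U,\gamma)$ is a semialgebraic bijection so that $\dim^*$ is genuinely preserved. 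This is bookkeeping rather than a real obstruction, since invertible affine changes of variables preserve the dimension of any semialgebraic set, but it is the one point where the argument has to be carried out explicitly rather than by direct scaling.
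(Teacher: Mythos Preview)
Your proof is correct and rests on the same scaling idea as the paper's: block-wise rescaling $b_{S_j}\mapsto\lambda_j b_{S_j}$ multiplies $\gamma(U,b)$ by $\prod_k\lambda_k^{\rg(S_k)}$, so the hypersurface $\prod_k\lambda_k^{\rg(S_k)}=1$ yields a $(d-1)$-dimensional family inside $S(U,\gamma)$. The only difference is organizational: the paper peels off one block at a time and runs an induction on $d$, whereas you parameterize all $d$ blocks simultaneously and read off the dimension of $\Lambda$ directly; your packaging is arguably cleaner and also makes explicit the orthogonalization step and the injectivity of $\lambda\mapsto b'(\lambda)$, both of which the paper leaves implicit.
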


\begin{proof} The last identity follows from Proposition \ref{prop:cvdirect}. For the proof of the inequality we use induction over the number $d$ of irreducible separators of $U$. If $d = 1$ there is nothing to prove. Therefore, assume $d \geq 1$ and let $V = U \setminus S_d$. Then (cf.~ \eqref{eq:toshow})
    \begin{equation*}
        C_\cv(U) = \frac{\dim(S_d)}{n} C_\cv(S_d) \oplus \frac{\dim(V)}{n} C_\cv(V).
    \end{equation*}
 So we may write  $\gamma = (\frac{\dim(S_d)}{n}\gamma_{S_d},\frac{\dim(V)}{n}\gamma_V)$ and  consider now an element $b = (b_{S_d},b_V) \in S(U,\gamma)$. 
    For any $\lambda > 0$ it holds 
    \begin{equation*}
        b_{\lambda} = \left(\lambda b_{S_d},\lambda^{-\frac{\dim(S_d)}{n - \dim(S_d)}} b_V \right) \in S(U,\gamma).
    \end{equation*}
    Thus, we conclude 
    \begin{equation*}
        \dim^* (S(U,\gamma)) \geq \dim^*(S(V,\gamma_V)) + 1 \geq (d-1) - 1 + 1 = d - 1.
    \end{equation*}
\end{proof}

We conjecture that equality holds in the proposition above.
\begin{conjecture}
  \label{Conjecture:NumberOfSol}
  Let \( U \in \mathcal{U}(n,m) \), 
let $U=S_1\cup S_2 \cup
\cdots \cup S_d$ be the unique partition into irreducible sets and let $\gamma \in C_{\cv}(U) \cap \R_{>0}^m$. 
Then it holds
\begin{equation*}
    \dim^*(S(U, \gamma)) = d - 1.
\end{equation*}
  \end{conjecture}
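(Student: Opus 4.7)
The plan is to establish the matching upper bound $\dim^*(S(U,\gamma))\leq d-1$, since the proposition already supplies the lower bound. The strategy has two ingredients: (i) a reduction to the irreducible case via the direct-sum decomposition, and (ii) a finiteness statement for $S(U,\gamma)$ when $\mathcal{F}(U)=\emptyset$.

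For the reduction, I would invoke Proposition \ref{prop:invariant} to arrange that the subspaces $\lin(S_j)$ are pairwise orthogonal, so that every $b\in S(U,\gamma)$ yields an orthogonal decomposition $P(U,b)=\bigoplus_j P(S_j,b_{S_j})$ as in \eqref{eq:directsumpoly}. The normalized splitting of cone-volumes displayed in the derivation of \eqref{eq:toshow} shows that the $S_j$-block of $\gamma$ equals $\tfrac{\rg S_j}{n}\alpha_j$ for an $\alpha_j\in C_\cv(S_j)$ uniquely read off from $\gamma$, while the individual block volumes $v_j := \vol_{\rg S_j}(P(S_j,b_{S_j}))$ are free subject only to $\prod_j v_j = 1$. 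Consequently one obtains a semialgebraic identification
\begin{equation*}
  S(U,\gamma) \;\simeq\; \Bigl\{(\mu_1,\dots,\mu_d)\in\R_{>0}^d : \textstyle\prod_j \mu_j^{\rg S_j} = 1\Bigr\} \;\times\; \prod_{j=1}^d S(S_j,\alpha_j),
\end{equation*}
whose dimension is $(d-1)+\sum_j \dim^* S(S_j,\alpha_j)$. Granting (ii), each factor on the right is finite and the upper bound follows.

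For the irreducible case, Lemma \ref{lem:mainsemi} shows that on each type cone the map
\begin{equation*}
  \Phi_k:\inter A_k(U)\cap\{v_k(b)=1\}\longrightarrow C_\cv(U),\qquad b\longmapsto \gamma(U,b)
\end{equation*}
is polynomial. Both source and target are semialgebraic of dimension $m-1$ (the target by Proposition \ref{prop:cvdirect}), so by the generic-fiber theorem the preimage of a \emph{generic} $\gamma$ is zero-dimensional. What remains is to rule out positive-dimensional fibers over \emph{every} $\gamma\in\R_{>0}^m$, equivalently to show that the Jacobian of $\Phi_k$ along the volume hypersurface is everywhere nonsingular on the cell where $\gamma(U,b)>0$. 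This is an infinitesimal rigidity statement: any tangent direction annihilated by $d\Phi_k$ should vanish identically on the $(m-1)$-dimensional slice of unit volume.

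The main obstacle is establishing this rigidity. In the planar case $n=2$, Stancu's explicit edge-length formulas from Section~4 make the Jacobian entirely computable, and irreducibility (no antipodal pair among the $u_i$) forces its determinant to be nonzero by a direct sign-counting argument. In the origin-symmetric case $U=U^s$, Theorem~\ref{thm:blyz} already furnishes a bijection $\gamma\leftrightarrow b^s$ from which the implicit function theorem extracts the required non-degeneracy on the symmetric locus. Outside these two regimes, the uniform non-degeneracy on $\R_{>0}^m$ is essentially the strict-uniqueness question for the discrete logarithmic Minkowski problem on irreducible matroids, which is presently open and is precisely why the statement is posed as a conjecture rather than a theorem. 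A plausible route for partial progress would be to combine the path-connectedness of $C_\cv(U)$ (Proposition~\ref{prop:pathconnected}) with the known symmetric uniqueness and then propagate non-degeneracy along paths by a deformation argument, exploiting the semialgebraic structure of $W_k(U)$ given in \eqref{eq:conevolumevectorstype} to control where the Jacobian could possibly drop rank.
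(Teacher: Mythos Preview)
The statement is labelled a \emph{Conjecture} in the paper and is not proved there; the paper only establishes the lower bound $\dim^*(S(U,\gamma))\geq d-1$ in the preceding Proposition and then states equality as open. So there is no proof in the paper to compare against.

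Your reduction step actually goes further than the paper does. The direct-sum parametrisation you sketch, giving $\dim^*(S(U,\gamma)) = (d-1) + \sum_j \dim^*(S(S_j,\alpha_j))$, is correct and is not spelled out in the paper (which only exploits the block-scaling freedom in one direction to obtain the lower bound). This cleanly isolates the conjecture to the irreducible case, exactly as you say, and the generic-fiber argument via Lemma~\ref{lem:mainsemi} and Proposition~\ref{prop:cvdirect} does show that $S(S_j,\alpha_j)$ is finite for generic $\alpha_j$.

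You also correctly identify the genuine obstruction: passing from generic to \emph{every} $\alpha\in C_\cv(S)\cap\R^{|S|}_{>0}$ for irreducible $S$ is the open uniqueness question, and nothing in the paper closes it. One minor overclaim to flag: Theorem~\ref{thm:blyz} as recorded here is an existence statement (equality of two sets), not a uniqueness statement, so it does not by itself supply the bijection $\gamma\leftrightarrow b^s$ you invoke; symmetric uniqueness is a separate result of B\"or\"oczky--Lutwak--Yang--Zhang not quoted in this paper. And even granting it, symmetric uniqueness only controls the symmetric slice of $S(U^s,\gamma)$, not the full preimage over a symmetric $\gamma$, so the deformation route you propose would still need an additional argument.
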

The conjecture suggests that the set \( S(U, \gamma) \) is finite if and only if \( U \) is irreducible. 
The last example shows that in general it is necessary to assume that the cone-volume vector is strictly positive. 

\begin{example}
    Consider the set $U = \{ e_1, -e_2,-e_1,e_2,e_1+e_2 \} \in \mathcal{U}(2,5)$ that is irreducible and the cone-volume vector $\gamma = \left(\frac{1}{3},\frac{1}{3},\frac{1}{9},\frac{1}{9},\frac{1}{9}\right)$. 
    The corresponding polynomial equations are
    \begin{align*}
        f_1(b) & = \left( - \sqrt{2} b_1 + b_2 + \sqrt{2} b_5 \right) \cdot \frac{b_1}{2} - \frac{1}{3}, \\
        f_2(b) & = \left( b_1 + b_3 \right) \cdot \frac{b_2}{2} - \frac{1}{3}, \\
        f_3(b) & = \left( b_2 + b_4 \right) \cdot \frac{b_3}{2} - \frac{1}{9}, \\
        f_4(b) & = \left( -\sqrt{2} b_4 + \sqrt{2} b_5 + b_3 \right) \cdot \frac{b_4}{2} - \frac{1}{9}, \\
        f_5(b) & = \left( - 2 b_5 + \sqrt{2} b_1 + \sqrt{2} b_4 \right) \cdot \frac{b_5}{2} - \frac{1}{9}, \\
        v(b) & = f_1(b) + f_2(b) + f_3(b) + f_4(b) + f_5(b).
    \end{align*}
    Using the software \emph{MomentPolynomialOpt.jl} \cite{MomentPolynomialOpt}, we can compute the solution set $S(U,\gamma) \approx \{ (0.66,0.82,0.15,0.66,0.79)^\intercal \}$, which is finite. 
    
    However, if we consider the cone-volume vector $\hat{\gamma} = (\frac{1}{4},\frac{1}{4},\frac{1}{4},\frac{1}{4},0)$ that is not strictly positive, we get $|S(U,\gamma)| = \infty$, since it corresponds to the reducible set $U' = \{ \pm e_1,\pm e_2 \}$. \hfill$\triangle$
\end{example}

\subsection*{Acknowledgement}

 This work is supported by the Deutsche Forschungsgemeinschaft (DFG, German Research Foundation) under Germany’s Excellence Strategy – The Berlin Mathematics Research Center MATH+ (EXC-2046/1, project ID: 390685689).



\begin{thebibliography}{99}

\bibitem{Aigner1997}
M. Aigner. \emph{Combinatorial theory}. Classics in Mathematics. Reprint of the 1979 original. Springer-Verlag, Berlin, 1997, viii+483. ISBN: 3-540-61787-6. \url{https://doi.org/10.1007/978-3-642-59101-3}.

\bibitem{arkani2017positive}
N. Arkani-Hamed, Y. Bai, and T. Lam. ``Positive geometries and canonical forms''. \emph{Journal of High Energy Physics} 2017.11 (2017), 1--124.

\bibitem{MomentPolynomialOpt}
L. Baldi and B. Mourrain. \emph{MomentPolynomialOpt.jl}. May 26, 2025. \url{https://github.com/AlgebraicGeometricModeling/MomentPolynomialOpt.jl?tab=readme-ov-file}.

\bibitem{Barthe2005}
F. Barthe, O. Gu\'edon, S. Mendelson, and A. Naor. ``A probabilistic approach to the geometry of the $\ell_p^n$-ball''. \emph{The Annals of Probability} 33 (Mar. 2005). \url{https://doi.org/10.1214/009117904000000874}.

\bibitem{basu1997improved}
S. Basu. ``An improved algorithm for quantifier elimination over real closed fields''. \emph{Proceedings 38th Annual Symposium on Foundations of Computer Science}. IEEE, 1997, 56--65. \url{https://doi.org/10.1109/SFCS.1997.646093}.

\bibitem{bochnak2013real}
J. Bochnak, M. Coste, and M.-F. Roy. \emph{Real algebraic geometry}. Springer, 2013. ISBN: 978-3-662-03718-8. \url{https://doi.org/10.1007/978-3-662-03718-8}.

\bibitem{Boeroeczky2023}
K. J. B\"or\"oczky. ``The logarithmic Minkowski conjecture and the $L_p$-Minkowski problem''. \emph{Harmonic analysis and convexity}. Berlin: De Gruyter, 2023, 83--118. ISBN: 978-3-11-077537-2; 978-3-11-077538-9. \url{https://doi.org/10.1515/9783110775389-003}.

\bibitem{BoeroeczkyHegedusZhu2016}
K. J. B\"or\"oczky, P. Hegedus, and G. Zhu. ``On the discrete logarithmic Minkowski problem''. \emph{IMRN. International Mathematics Research Notices} 2016.6 (2016), 1807--1838. ISSN: 1073-7928; 1687-0247/e. \url{https://doi.org/10.1093/imrn/rnv189}.

\bibitem{BoeroeczkyHegedues2015}
K. J. B\"or\"oczky and P. Heged\H{u}s. ``The cone volume measure of antipodal points''. \emph{Acta Mathematica Hungarica} 146.2 (2015), 449--465. ISSN: 0236-5294. \url{https://doi.org/10.1007/s10474-015-0511-z}.

\bibitem{BoeroeczkyHenk2016a}
K. J. B\"or\"oczky and M. Henk. ``Cone-volume measure of general centered convex bodies''. \emph{Advances in Mathematics} 286 (2016), 703--721. ISSN: 0001-8708. \url{https://doi.org/10.1016/j.aim.2015.09.021}.

\bibitem{BoeroeczkyLutwakYangEtAl2025}
K. J. B\"or\"oczky, E. Lutwak, D. Yang, and G. Zhang. \emph{The Logarithmic Minkowski Problem}. Preprint, arXiv:2502.05430 [math.MG] (2025). \url{https://arxiv.org/abs/2502.05430}.

\bibitem{ChenLiZhu2019}
S. Chen, Q. Li, and G. Zhu. ``The logarithmic Minkowski problem for non-symmetric measures''. \emph{Transactions of the American Mathematical Society} 371.4 (2019), 2623--2641. ISSN: 0002-9947. \url{https://doi.org/10.1090/tran/7499}.

\bibitem{de2014monge}
G. De Philippis and A. Figalli. ``The Monge--Amp\`ere equation and its link to optimal transportation''. \emph{Bulletin of the American Mathematical Society} 51.4 (2014), 527--580. ISSN: 1088-9485.

\bibitem{MR270945}
J. Edmonds. ``Submodular functions, matroids, and certain polyhedra''. \emph{Combinatorial Structures and their Applications (Proc. Calgary Internat. Conf., Calgary, Alta., 1969)}. Gordon and Breach, New York-London-Paris, 1970, 69--87. \url{https://doi.org/10.1007/3-540-36478-1_2}.

\bibitem{FeichtnerSturmfels2005}
E. Feichtner and B. Sturmfels. ``Matroid polytopes, nested sets and Bergman fans''. \emph{Portugaliae Mathematica Nova S\'erie} 62.4 (2005), 437--468. ISSN: 0032-5155. \url{https://doi.org/10.48550/arXiv.math/0411260}.

\bibitem{fillastre2017shapes}
F. Fillastre and I. Izmestiev. ``Shapes of polyhedra, mixed volumes and hyperbolic geometry''. \emph{Mathematika} 63.1 (2017), 124--183. ISSN: 0025-5793. \url{https://doi.org/10.1112/S002557931600019X}.

\bibitem{CM_1987__62_3_263_0}
M. Gromov and V. D. Milman. ``Generalization of the spherical isoperimetric inequality to uniformly convex Banach spaces''. \emph{Compositio Mathematica} 61 (1987), 263--282. \url{http://dml.mathdoc.fr/item/CM_1987__62_3_263_0}.

\bibitem{GroetschelLovaszSchrijver1993}
M. Gr\"otschel, L. Lovasz, and A. Schrijver. \emph{Geometric algorithms and combinatorial optimization}. Springer. Algorithms and Combinatorics. 2nd, 1993. ISBN: 978-3-642-78242-8. \url{https://doi.org/10.1007/978-3-642-78240-4}.

\bibitem{HE200673}
B. He, G. Leng, and K. Li. ``Projection problems for symmetric polytopes''. \emph{Advances in Mathematics} 207.1 (2006), 73--90. ISSN: 0001-8708. \url{https://doi.org/10.1016/j.aim.2005.11.006}.

\bibitem{henk2014cone}
M. Henk and E. Linke. ``Cone-volume measures of polytopes''. \emph{Advances in Mathematics} 253 (2014), 50--62. ISSN: 0001-8708. \url{https://doi.org/10.1016/j.aim.2013.11.015}.

\bibitem{HuangYangZhang2025}
Y. Huang, D. Yang, and G. Zhzng. ``Minkowski Problems for Geometric Measures'' (Feb. 2025). \url{https://doi.org/10.48550/ARXIV.2502.05427}. arXiv: \url{https://arxiv.org/abs/2502.05427}.

\bibitem{Mathematica}
W. R. Inc. \emph{Mathematica, Version 14.0}. Champaign, IL, 2024. \url{https://www.wolfram.com/mathematica}.

\bibitem{ju2005geometric}
T. Ju, S. Schaefer, J. D. Warren, and M. Desbrun. ``A geometric construction of coordinates for convex polyhedra using polar duals''. \emph{Symposium on Geometry Processing}. 2005, 181--186.

\bibitem{kohn2020projective}
K. Kohn and K. Ranestad. ``Projective geometry of Wachspress coordinates''. \emph{Foundations of Computational Mathematics} 20.5 (2020), 1135--1173. \url{https://doi.org/10.1007/s10208-019-09441-z}.

\bibitem{LiuLuSunEtAl2024}
Y. Liu, X. Lu, Q. Sun, and G. Xiong. ``The logarithmic Minkowski problem in $\mathbb{R}^2$''. \emph{Pure and Applied Mathematics Quarterly} 20.2 (2024), 869--902. \url{https://doi.org/10.4310/PAMQ.2024.v20.n2.a5}.

\bibitem{LiuSunXiong2024b}
Y. Liu, Q. Sun, and G. Xiong. ``A matroid polytope approach to sharp affine isoperimetric inequalities for volume decomposition functionals'' (2024). \url{https://doi.org/10.48550/ARXIV.2404.09152}.

\bibitem{LiuSunXiong2021}
Y. Liu, Q. Sun, and G. Xiong. ``Sharp affine isoperimetric inequalities for the volume decomposition functionals of polytopes''. \emph{Advances in Mathematics} 389 (2021), 107902. ISSN: 0001-8708. \url{https://doi.org/10.1016/j.aim.2021.107902}.

\bibitem{mcmullen1973representations}
P. McMullen. ``Representations of polytopes and polyhedral sets''. \emph{Geometriae Dedicata} 2.1 (1973), 83--99. \url{https://doi.org/10.1007/BF00149284}.

\bibitem{accb6532790c46f8b15a8d2da8ff473f}
A. Naor. ``The surface measure and cone measure on the sphere of $\ell_p^n$''. \emph{Transactions of the American Mathematical Society} 359.3 (Mar. 2007), 1045--1079. ISSN: 0002-9947. \url{https://doi.org/10.1090/S0002-9947-06-03939-0}.

\bibitem{Grigorios2009}
G. Paouris and E. Werner. ``Relative entropy of cone measures and $L_p$ centroid bodies''. \emph{Proceedings of the London Mathematical Society} 104 (Sept. 2009). \url{https://doi.org/10.1112/plms/pdr030}.

\bibitem{Dmitrii2025}
D. Pavlov and S. Telen. ``Santal\'o Geometry of Convex Polytopes''. \emph{SIAM Journal on Applied Algebra and Geometry} 9.1 (2025), 58--82. \url{https://doi.org/10.1137/24M1643566}.

\bibitem{Subspace_Concentration_of_Geometric_Measures}
H. Pollehn. ``Subspace Concentration of Geometric Measures''. PhD thesis. Berlin, Feb. 2019.

\bibitem{Regis2015a}
R. G. Regis. ``On the properties of positive spanning sets and positive bases''. \emph{Optimization and Engineering} 17.1 (Sept. 2015), 229--262. ISSN: 1573-2924. \url{https://doi.org/10.1007/s11081-015-9286-x}.

\bibitem{Schneider2014}
R. Schneider. \emph{Convex bodies: the Brunn-Minkowski theory}. Expanded. Vol. 151. Encyclopedia of Mathematics and its Applications. Cambridge University Press, Cambridge, 2014, xxii+736. ISBN: 978-1-107-60101-7. \url{https://doi.org/10.1017/CBO9781139003858}.

\bibitem{stancu2002discrete}
A. Stancu. ``The discrete planar $L_0$-Minkowski problem''. \emph{Advances in Mathematics} 167.1 (2002), 160--174. ISSN: 0001-8708. \url{https://doi.org/10.1006/aima.2001.2040}.

\bibitem{Stancu2016}
A. Stancu. ``The logarithmic Minkowski inequality for non-symmetric convex bodies''. \emph{Advances in Applied Mathematics} 73.C (2016), 43--58. ISSN: 0196-8858. \url{https://doi.org/10.1016/j.aam.2015.09.015}.

\bibitem{XIONG20103214}
G. Xiong. ``Extremum problems for the cone volume functional of convex polytopes''. \emph{Advances in Mathematics} 225.6 (2010), 3214--3228. ISSN: 0001-8708. \url{https://doi.org/10.1016/j.aim.2010.05.016}.

\bibitem{Zhu2014}
G. Zhu. ``The logarithmic Minkowski problem for polytopes''. \emph{Advances in Mathematics} 262 (2014), 909--931. ISSN: 0001-8708. \url{https://doi.org/10.1016/j.aim.2014.06.004}.

\end{thebibliography}
\end{document}